\documentclass[11pt, a4paper]{article}
\usepackage{tikz}
\usetikzlibrary{calc}
\usepackage[utf8]{inputenc}
\usepackage{amsfonts}
\usepackage{appendix}
\usepackage{mathrsfs}
\usepackage{babel}
\usepackage{amsmath}
\date{}
\usepackage{dsfont}
\usepackage{amssymb}
\usepackage{graphicx}
\usepackage{yfonts}
\usepackage{array}
\usepackage{amsthm}
\usepackage[linkcolor=blue,colorlinks=true]{hyperref}
\usepackage{xcolor}
\newtheorem{Def}{Definition}
\newtheorem{thm}{Theorem}
\newtheorem{prop}{Proposition}
\newtheorem{lem}{Lemma}
\newtheorem{cor}{Corollary}
\newtheorem{rem}{Remark}

\newcolumntype{R}[1]{>{\raggedleft\arraybackslash }b{#1}}
\newcolumntype{L}[1]{>{\raggedright\arraybackslash }b{#1}}
\newcolumntype{C}[1]{>{\centering\arraybackslash }b{#1}}
\newcommand{\R}{\mathbb{R}}
\newcommand{\Z}{\mathbb{Z}}

\newcommand{\e}{\eta}

\numberwithin{equation}{section}

\usepackage{geometry} 
\geometry{hmargin= 3cm,vmargin=3cm}

\title{Extinction and survival in inherited sterility}

\date{}

\usepackage{authblk}

\setlength{\affilsep}{2em}  
\newsavebox\affbox
\author[1]{\text{Sonia Velasco}}
\affil[1]{ Instituto de Matemática Pura e Aplicada.}

\begin{document}

\maketitle

\begin{abstract} 
We introduce an interacting particle system which models the inherited sterility method. Individuals evolve on $\Z^d$ according to a contact process with parameter $\lambda>0$. With probability $p\in [0,1]$ an offspring is fertile and can give birth, on neighboring sites, to other individuals at rate $\lambda$. With probability $1-p$, an offspring is sterile and blocks the site it sits on until it dies. The goal is to prove that at fixed $\lambda$, the system survives for large enough $p$ and dies out for small enough $p$. The model is not attractive, since an increase of fertile individuals potentially causes that of sterile ones. However, thanks to a comparison argument with attractive models, we are able to answer our question. 
\end{abstract}


\section{Introduction}
In this paper, we introduce an interacting particle system, suggested to us by Rinaldo Schinazi \cite{Rinaldo}, to model the "Inherited Sterility" (IS) method. Developed in the mid-20th century, see \cite{ISinLepidoptera}, this pest control technique, particularly effective against crop-damaging invasive species, is an adaptation of the Sterile Insect Technique (SIT) introduced by E. Knipling in the 1950s to eliminate New World screw worms. In SIT, sterilized (thanks to gamma rays) males are released to mate with fertile females, leading to no offspring and eventual population decline. However, some species, like Lepidoptera, require a high level of radiation to be completely sterilized, reducing the competitiveness of treated males. To counterbalance this effect, in the IS method, the species is partially sterilized so that it produces a certain proportion of sterile offspring, and another of fertile ones. The fertile offspring themselves then have a certain chance of giving birth to fertile or sterile individuals and so on. We refer to the reference book \cite{TIS} for a detailed list of trials and programs regarding the SIT and to its Chapter 2.4 for Inherited Sterility.

For a mathematical analysis of the SIT, an interacting particle system is introduced in \cite{CPRS} as a toy model. In this paper, a phase transition result for survival or extinction is established at the microscopic level and in infinite volume. The macroscopic, out-of-equilibrium behavior of this system is explored in \cite{KMS} in infinite volume and more recently in \cite{MSV} at equilibrium in finite volume with slow reservoirs. Another interacting particle system for SIT is introduced in \cite{Durrett}, also yielding a phase transition result. Both \cite{CPRS} and \cite{Durrett} rely on the monotonicity of the dynamics: fertile individuals increase the likelihood of survival. However, this does not apply to the IS method, as more fertile individuals can lead to more sterile ones over time. This notable fact makes the mathematical analysis of our IS model challenging. 

Our model is a variation of the contact process on $\Z^d$. Recall that in the contact process, sites are either empty (in state $0$), or occupied (in state $1$) by an individual which can give birth to another individual on a neighbouring empty site at a given rate $\lambda>0$, or die at rate $1$. In our model, sites can be empty (in state $0$), occupied by a fertile individual (in state $1$), or occupied by a sterile individual (in state $-1$). A fertile individual can give birth to another individual on a neighbouring empty site at a given rate $\lambda>0$. This new individual is fertile with probability $p\in [0,1]$, and sterile with probability $1-p$. Sterile individuals cannot reproduce, and individuals, whether fertile or sterile, die at rate $1$. We refer to this process as an $IS(\lambda,p)$ process and our goal is to establish conditions on $\lambda$ and $p$ under which the population of fertile individuals survives or becomes extinct.

Recall that for the contact process, there is a critical value $\lambda_c(d)\in (0,\infty)$ such that for $\lambda\leq \lambda_c(d)$, the population dies out, and for $\lambda>\lambda_c(d)$ it survives. We refer to \cite[Chapter 6]{IPS} and \cite{Liggett2} for detailed surveys on the contact process. An essential ingredient in the proof of this phase transition result is the monotonicity of the contact process in the parameter $\lambda$, that is to say, that given a contact process $(\zeta^1_t)_{t\geq 0}$ with parameter $\lambda_1$, and a contact process $(\zeta^2_t)_{t\geq 0}$ with parameter $\lambda_2$, if $\lambda_1\leq \lambda_2$, $\zeta^2$ stochastically dominates $\zeta^1$. In particular, if $\zeta^2$ becomes extinct, so does $\zeta^1$ and if $\zeta^1$ survives, so does $\zeta^2$. Unfortunately, in our model for inherited sterility, there is no monotonicity in any of the parameters of the model (see Proposition \ref{Nomonot}) so we are not able to establish a threshold value in $p$ or $\lambda$ under which the process becomes extinct and above which it survives. However, we manage to prove the following.\medskip 

Fix $d\geq 1$, and recall that $\lambda_c(d)$ is the critical value for the contact process.
    \begin{itemize}
        \item[(i)] If $\lambda>0$ and $p\in [0,1]$ are such that $\lambda p \leq \lambda_c(d)$, an $IS(\lambda,p)$ process on $\Z^d$ almost surely becomes extinct.
        \item[(ii)] If $\lambda>\lambda_c(d)$, there exists a $\check{p}(\lambda)\in [\lambda_c(d)/\lambda,1)$ such that for any $p\geq \check{p}(\lambda)$, an $IS(\lambda,p)$ process on $\Z^d$ survives.
    \end{itemize}
The strategy pursued is the following: to prove $(i)$ we show  that our process is stochastically dominated by a basic contact process which becomes extinct when $\lambda p$ is small enough. To prove $(ii)$, we introduce a contact process with a dynamic random environment which survives when $\lambda$ and $p$ are large enough, and which we refer to as a $Spont(\lambda,p)$ process. This process is similar to the one studied in \cite{LinkerRemenik} where edges randomly become blocked and prevent sites from reproducing, except that in our setting, it is sites rather than edges which are randomly blocked.  We show that an $IS(\lambda,p)$ process stochastically dominates a $Spont(\lambda,p)$ process, and therefore survives too. The main part of the work then relies on proving the survival of $Spont(\lambda,p)$. For that, in the spirit of \cite{Coex} and \cite{CPRS}, we compare its graphical representation to oriented percolation, and use a renormalization argument.

Our proof simplifies the renormalization strategy pursued in \cite{Coex} and \cite{CPRS}, let us briefly explain why. The renormalization argument involves defining a specific event within a space-time box in the graphical representation of the process, which enables the spread of the population. By a comparison argument between the graphical representation and an oriented percolation, if one proves that tuning the size of the space-time box, the event happens with large enough probability, clustering in the underlying oriented percolation graph will imply survival of the population. The crux is thus the choice of the event and the proof that it happens with large enough probability. In \cite{Coex} and \cite{CPRS}, the proof is done in one dimension, and extended to any dimension by an embedding argument, which fails here due to lack of monotonicity. Our renormalization argument must therefore directly be performed in arbitrary dimension. For that, we bypass a crucial step needed in $\cite{Coex}$ and $\cite{CPRS}$, which guarantees the presence of enough $1$'s after a certain time in the space-time block, but uses properties of the contact process in one dimension. Instead, we rely on a duality argument, valid in any dimension, which allows us to subdivide the event into fewer events, whose probability can be made arbitrarily large.
\medskip

The paper is organized as follows. In Section \ref{DandR}, we introduce the models and state all the results. In Section \ref{graphrepr} we define the graphical representation associated to each model and prove the stochastic dominations. In section \ref{PMR}, we prove the survival of the $Spont(\lambda,p)$ process.

\section{Definitions and results}\label{DandR}
\subsection{The inherited sterility model and main result}
For $d\geq 1$, introduce the state space $\Omega = \{-1,0,1\}^{\Z^d}$, so that for $\e\in \Omega$ and $x\in \Z^d$, $\e(x)$ is the state of site $x$ in $\e$. 
We say that 
\begin{equation} 
   \e(x)= \left\{
    \begin{array}{ll}
        1,~ ~\text{if there is a fertile individual at site } x,\\ 
        -1,~ ~\text{if there is a sterile individual at site } x,\\
        0,~ ~\text{if site $x$ is empty}.
        \end{array}
\right.
\end{equation}
Two sites $x$ and $y$ are nearest neighbours in $\Z^d$ if $\|x-y\|_1=1$ and we write $x\sim y$. 

Introduce $\e_1,\e_{-1}, \e_{0}\in \{0,1\}^{\Z^d}$ as follows:
\begin{equation}
    \e_1(x) = \mathds{1}_{\e(x)=1},~ ~~ \e_{-1}(x) = \mathds{1}_{\e(x)=-1},~ ~~ \e_{0}(x) = \mathds{1}_{\e(x)=0}.
\end{equation}
For $x\in \Z^d$ and $\e\in \Omega$, denote by $n_1(x,\e) = \underset{y\sim x}{\sum}\e_1(y) $ the number of neighbours of $x$ in state $1$ in $\e$.

\begin{Def}\label{DefinitionofIS}
    The inherited sterility process with birth rate $\lambda>0$ and fertility probability $p\in [0,1]$, that we will refer to as $IS(\lambda,p)$, is the Markov process $(\e_t)_{t\geq 0}$ on the state space $\Omega$, whose transition rates at $x\in \Z^d$ for a current configuration $\e$ are given by:
    \begin{equation}\label{ratesIS}
        1,-1 \rightarrow 0: \mathrm{at~ rate~ }1,~ ~ 0 \rightarrow 1: \mathrm{at~ rate~ }\lambda p n_1(x,\e),~ ~ 0 \rightarrow -1: \mathrm{at~ rate~ }\lambda (1-p) n_1(x,\e).
    \end{equation}
\end{Def}
For $\e\in \Omega$, $x\in \Z^d$ and $i\in \{-1,0,1\}$, denote by $\sigma^{i,x}\e$ the configuration obtained from $\e$ after flipping the state of $x$ to $i$:
\begin{equation} 
   \sigma^{i,x}\e(y)= \left\{
    \begin{array}{ll}
        i,~ ~\text{if}~ y=x\\ 
        \e(y),~ ~\text{otherwise}.
        \end{array}
\right.
\end{equation}
The infinitesimal generator of an $IS(\lambda,p)$ process is given by: for any cylinder function $f$ on $\Omega$ and configuration $\e\in \Omega$,
\begin{equation}\label{genIS}
        \begin{split}
        \mathcal{L}f(\e) &= \sum_{x\in \Z^d} \sum_{i\in \{-1,0,1\}} c(x,\e,i)\big[f(\sigma^{i,x}\e) -f(\e)\big],
    \end{split}
\end{equation}
where $c(x,\e,i)$ is the transition rate to go from state $\e(x)$ to $i$. The rates are given by:
\begin{equation}\label{ratesgen}
\begin{split}
        &c(x,\e,0) = 1,~~~\text{  if  } \e(x)\in \{-1,1\},\\
        &c(x,\e,1) = \lambda p n_1(x,\e),~~~\text{  if  } \e(x)=0,\\
        &c(x,\e,-1)= \lambda (1-p) n_1(x,\e),~~~\text{  if  } \e(x)=0.
\end{split}
\end{equation}
Since all the rates in \eqref{ratesgen} are bounded, by \cite[Chapter 1, Theorem 3.9]{IPS}, there exists a unique Markov process whose dynamics is induced by the infinitesimal generator \eqref{genIS}.\medskip

For $\e\in \Omega$, we will denote by $\mathbb{P}_{\e}^{\lambda,p}$ the probability measure on the space of continuous time trajectories on $\Omega$ induced by $(\e_t)_{t\geq 0}$ when $\e_0=\e$. We also denote by
\begin{equation}\label{setoffertile}
    A(\e) =  \{x\in \Z^d,~ \e(x)=1\}.
\end{equation}
An $IS(\lambda,p)$ process $(\e_t)_{t\geq 0}$ is said to \textit{survive} if,
\begin{equation}
    \mathbb{P}_{\{0\}}^{\lambda,p}\Big( \forall t> 0,~  A(\e_t)\neq\emptyset \Big)>0,
\end{equation}
where, by abuse of notation, $\{0\}$ is the configuration containing a $1$ at site $0$ and $0$'s everywhere else. The process is said to \textit{become extinct} otherwise.

\begin{thm}\label{Mainresult}
Fix $d\geq 1$, and recall that $\lambda_c(d)$ is the critical value for the contact process.
    \begin{itemize}
        \item[(i)] If $\lambda>0$ and $p\in [0,1]$ are such that $\lambda p \leq \lambda_c(d)$, an $IS(\lambda,p)$ process on $\Z^d$ almost surely becomes extinct.
        \item[(ii)] If $\lambda>\lambda_c(d)$, there exists a $\check{p}(\lambda)\in [\lambda_c(d)/\lambda,1)$ such that for any $p\geq \check{p}(\lambda)$, an $IS(\lambda,p)$ process on $\Z^d$ survives.
    \end{itemize}
\end{thm}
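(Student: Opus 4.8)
The plan is to prove the two parts separately, each via a stochastic domination by a model whose behaviour is already understood.

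\medskip

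\noindent\textbf{Part (i): extinction when $\lambda p \le \lambda_c(d)$.} First I would forget the distinction between fertile and sterile individuals and merge states $-1$ and $1$ into a single "occupied" state. Under this projection, the only births that matter for survival of fertile individuals come from state-$1$ sites onto empty sites at rate $\lambda p n_1(x,\eta)$, so a natural guess is that the $\eta_1$-marginal is dominated by a contact process with parameter $\lambda p$. The cleanest way to make this rigorous is through the graphical representation (constructed in Section~\ref{graphrepr}): use independent Poisson processes of death marks at every site (rate $1$) and of birth arrows along every oriented edge at rate $\lambda$, and on each birth arrow flip an independent $p$-coin deciding fertile versus sterile. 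The set $A(\eta_t)$ of fertile sites is then exactly the set of sites reachable from the initial fertile site by open paths that (a) use only $p$-coin-fertile arrows and (b) at the moment of use land on a site that happens to be empty; relaxing (b) to "land on any non-fertile site" only enlarges the reachable set, and the resulting process is precisely a contact process with birth rate $\lambda p$ and death rate $1$. Hence $A(\eta_t) \subseteq \zeta_t$ for a contact process $\zeta$ with parameter $\lambda p \le \lambda_c(d)$, which dies out almost surely; therefore so does $A(\eta_t)$, giving~(i).

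\medskip

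\noindent\textbf{Part (ii): survival when $\lambda>\lambda_c(d)$ and $p$ close to $1$.} Here I would use the $Spont(\lambda,p)$ process announced in the introduction: a contact process with parameter $\lambda$ in which, superimposed, every site is independently being "spontaneously blocked" (sterile) and unblocked according to some dynamic random environment governed by $p$, and a blocked site cannot give birth nor be born onto. The first task is to show $IS(\lambda,p)$ stochastically dominates $Spont(\lambda,p)$: on a common graphical representation, the IS sterile sites are only created as offspring and die at rate $1$, whereas in $Spont$ one arranges the environment so that at least as many sites are available; a coupling argument shows the fertile set in $IS$ contains the live set in $Spont$. The second, and main, task is to prove $Spont(\lambda,p)$ survives for $p$ near $1$. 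Following \cite{Coex} and \cite{CPRS}, I would set up a block construction: fix a large space-time box, define a "good event" on the graphical representation inside the box that transmits infection from one face to translated copies, show that comparison with supercritical oriented percolation on $\Z^d$ is valid, and then prove the good event has probability as close to $1$ as desired by first taking $\lambda>\lambda_c(d)$ so the pure contact process in the box is well-behaved, and then taking $p$ close enough to $1$ so that with high probability no blocking event spoils the relevant finite region during the relevant finite time. The duality step flagged in the introduction is what lets one do this directly in dimension $d$: instead of controlling "enough $1$'s everywhere" (the one-dimensional step of \cite{Coex,CPRS}), run the dual contact process from the target region and intersect a bounded number of high-probability events — each a statement that a dual path exists and meets no blocked site — each of which can be made arbitrarily likely.

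\medskip

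\noindent\textbf{Main obstacle.} The routine parts are the graphical-representation domination in~(i) and the oriented-percolation comparison machinery, which is by now standard. The delicate point is step~(ii): making the coupling $IS \succeq Spont$ actually work despite the non-monotonicity (Proposition~\ref{Nomonot}) — one must choose the $Spont$ environment adapted to the IS graphical representation so that sterile sites in $IS$ are never "worse" than blocked sites in $Spont$ — and then carrying out the renormalization for $Spont$ uniformly in the environment, i.e.\ quantifying how close to $1$ the parameter $p$ must be, as a function of the box size chosen to make the contact-process part supercritical-percolating. Getting these two limits in the right order (first fix the box using $\lambda>\lambda_c(d)$, then push $p\to 1$) is where the real work lies, and it is also what forces $\check p(\lambda)<1$ rather than an explicit threshold.
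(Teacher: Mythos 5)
Your proposal follows essentially the same route as the paper: part (i) is the graphical-representation (basic coupling) domination of the fertile set by a contact process with parameter $\lambda p$ (the paper's Theorem \ref{Compcontact}), and part (ii) is the domination of $Spont(\lambda,p)$ by $IS(\lambda,p)$ followed by a block/renormalization argument with oriented-percolation comparison and a duality step for the survival of $Spont$ at $p$ close to $1$, with the same order of limits (box size first, then $p\to 1$). The only imprecisions are cosmetic: in the paper the contact part of $Spont$ runs at rate $\lambda p$ (not $\lambda$) with spontaneous blocking of empty sites at rate $2d\lambda(1-p)$, and the comparison is with a two-dimensional $M$-dependent oriented site percolation rather than percolation on $\Z^d$; neither changes the approach.
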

In the rest of the paper, if $X$ is an ordered set, given two configurations $\xi_1$ and $\xi_2$ in $X^{\Z^d}$, we say that $\xi_1\leq \xi_2$ if for any $x\in \Z^d$, $\xi_1(x)\leq \xi_2(x)$.\medskip

A convenient tool for the proof of extinction and survival in the context of non conservative particle systems is monotonicity, defined as follows:

\begin{Def}
    Consider $X$ an ordered set. A process $(\zeta_t)_{t\geq 0}$ with values in $X^{\Z^d}$ and whose dynamics is parameterized by a certain value $q$ is said to be monotone in $q$ if, when $q_1\leq q_2$, one can couple $(\zeta^{(1)}_t)_{t\geq 0}$ with dynamics parameter $q_1$, and  $(\zeta^{(2)}_t)_{t\geq 0}$ with dynamics parameter $q_2$, in such a way that
    $$\zeta^{(1)}_0 \leq \zeta^{(2)}_0 \Rightarrow \zeta^{(1)}_t \leq \zeta^{(2)}_t~ a.s.~ \text{for all } t>0. $$
\end{Def}
For our model, there is no monotonicity in $p$:
\begin{prop}\label{Nomonot}
    For any ordering of $\{-1,0,1\}$ and any $\lambda>0$, an $IS$ process on $\Z^d$ with birth rate $\lambda$ is not monotone in the parameter $p$.
\end{prop}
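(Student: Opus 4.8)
The plan is to exhibit, for any fixed ordering of $\{-1,0,1\}$, a concrete finite configuration on which the coupling required by monotonicity must fail, using a single transition. Since there are only $3!=6$ possible orderings of $\{-1,0,1\}$, but they come in pairs (an ordering and its reverse give the same partial order on configurations up to swapping $\xi_1\leftrightarrow\xi_2$ and $q_1\leftrightarrow q_2$), it suffices to treat the three essentially different cases according to whether $0$ is the minimum, the maximum, or the middle element. The key obstruction is the following structural asymmetry in the rates \eqref{ratesgen}: from an empty site with a fertile neighbour, the \emph{total} birth rate $\lambda n_1(x,\e)$ is independent of $p$, but $p$ controls how that total mass is split between the transition $0\to 1$ (rate $\lambda p n_1$) and the transition $0\to -1$ (rate $\lambda(1-p)n_1$). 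So increasing $p$ makes $1$'s appear faster but $-1$'s appear slower, and no ordering of the three states can make both of these changes ``monotone in the same direction.''

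Concretely, I would argue as follows. Fix $\lambda>0$ and pick $0<q_1<q_2<1$. Consider two $IS$ processes, $\e^{(1)}$ with fertility probability $q_1$ and $\e^{(2)}$ with $q_2$, started from the \emph{same} initial configuration $\e^{(1)}_0=\e^{(2)}_0=\e$, where $\e$ has a single fertile individual at the origin and $0$'s elsewhere; then trivially $\e^{(1)}_0\leq\e^{(2)}_0$ for any ordering. Now look at the first birth event onto a fixed neighbour $x\sim 0$. In process $\e^{(2)}$ the chance that this birth produces a $1$ rather than a $-1$ is $q_2>q_1$.

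\begin{itemize}
  \item[(a)] If the ordering has $-1<0<1$ (the ``natural'' one): after a birth at $x$, process $\e^{(2)}$ can be in state $\sigma^{-1,x}\e$ while $\e^{(1)}$ is in $\sigma^{1,x}\e$ with positive probability under \emph{any} coupling that preserves the marginals, because the event $\{\e^{(2)}$ produces $-1$ at $x\}$ has probability $1-q_2>0$ and the event $\{\e^{(1)}$ produces $1$ at $x\}$ has probability $q_1>0$, and these are not mutually exclusive once we only demand the correct one-dimensional marginals; but $\sigma^{-1,x}\e \not\geq \sigma^{1,x}\e$ since $-1<1$ at site $x$. To make this rigorous I would compare the two generators on a suitable monotone test function (e.g. $f(\e)=\e(x)$) evaluated on the common state $\e$: $\mathcal L^{(2)}f(\e)-\mathcal L^{(1)}f(\e)$ has a sign forced by the rates, and one checks it has the \emph{wrong} sign for monotonicity in this ordering, which by the standard generator criterion for stochastic monotonicity (see \cite[Chapter 2]{IPS}) rules out the coupling.
  \item[(b)] If the ordering has $1<0<-1$ (the reverse): the same example works with the roles of the two processes interchanged — now it is $\e^{(1)}$ (smaller parameter) that more readily produces the larger-in-this-ordering state $-1$, so again $\e^{(1)}_t\leq\e^{(2)}_t$ fails.
  \item[(c)] If $0$ is not in the middle, say the ordering is $0<1<-1$ or $0<-1<1$ (and their reverses), then from the common state $\e$ with $\e(x)=0$, process $\e^{(2)}$ moves $x$ up (to $1$ or $-1$, both $>0$) at total rate $\lambda n_1(x,\e)$, the same as $\e^{(1)}$; here the obstruction is subtler and I would instead use a two-site test configuration, or exploit the death dynamics: once $1$'s and $-1$'s are present they both die at rate $1$, so a configuration that has already accumulated more $-1$'s in the $q_2$-process (which happens with positive probability over a short time interval, even though $q_2>q_1$, because $-1$'s \emph{are} produced, just at rate $\lambda(1-q_2)n_1$) cannot be dominated appropriately. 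The cleanest uniform argument covering all six orderings is again the generator comparison: for the monotone function $f(\e)=\mathds 1_{\e(x)=-1}$ one has, on any $\e$ with $\e(x)=0$ and $n_1(x,\e)\geq 1$, that $\mathcal L^{(i)}f(\e)=\lambda(1-q_i)n_1(x,\e)$ is \emph{decreasing} in $q_i$, while for $g(\e)=\mathds 1_{\e(x)=1}$ one has $\mathcal L^{(i)}g(\e)=\lambda q_i n_1(x,\e)$, which is \emph{increasing} in $q_i$; since $f$ and $g$ are both monotone with respect to \emph{some} ordering (and for every ordering of $\{-1,0,1\}$ at least one of the two indicator functions $\mathds 1_{\e(x)=1}$, $\mathds 1_{\e(x)=-1}$, $\mathds 1_{\e(x)\in\{1,-1\}}$ is a monotone function whose generator fails to be monotone in $q$), no ordering can make the whole dynamics monotone.
\end{itemize}

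The main obstacle is packaging cases (a)–(c) uniformly rather than ordering by ordering: one wants a single clean criterion. I expect the cleanest route is to invoke the standard infinitesimal characterization of stochastic monotonicity for spin(-like) systems — a coupling as in the Definition exists iff for every pair $q_1\le q_2$ and every pair of configurations $\e\le\e'$ and every increasing cylinder function $f$, $\mathcal L^{(q_1)}f(\e)\le\mathcal L^{(q_2)}f(\e')$ (with appropriate attention to the fact that $X=\{-1,0,1\}$ is merely a chain, so the system is not a spin system but the Liggett-type criterion still applies) — and then to note that the pair $\big(\e,\e\big)$ with a single $1$ at the origin already violates this for the test function $\mathds 1_{\e(x)=-1}$ under the natural ordering, and symmetric choices handle the rest. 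Verifying that for \emph{each} of the six orderings there is an explicitly monotone test function witnessing the failure is then a short finite check, which I would present in a small table or a one-line case analysis.
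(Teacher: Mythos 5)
Your central argument has a genuine gap: the obstruction you propose to exploit is evaluated at a \emph{common} configuration (both processes started from the same $\eta$, test functions compared at the same state), and for the natural order $-1<0<1$ this simply does not produce a violation. Indeed, for any increasing $f$ and any configuration $\eta$, the $p$-derivative of the generator is $\lambda\sum_{x:\,\eta(x)=0}n_1(x,\eta)\bigl[f(\sigma^{1,x}\eta)-f(\sigma^{-1,x}\eta)\bigr]\geq 0$, so every equal-configuration generator inequality holds with the \emph{correct} sign: your proposed witnesses $f(\eta)=\eta(x)$, $\mathds{1}_{\eta(x)=1}$ (increasing, generator increasing in $p$) and $\mathds{1}_{\eta(x)=-1}$ (decreasing, generator decreasing in $p$) are all consistent with monotonicity. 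Likewise, your claim in case (a) that under \emph{any} marginal-preserving coupling the joint event $\{\eta^{(1)}$ puts a $1$ at $x$, $\eta^{(2)}$ puts a $-1$ at $x\}$ has positive probability is false: a coupling can use a common exponential clock and a common uniform to decide the offspring type, so the only disagreement at the first birth is the harmless one ($\eta^{(1)}$ gets $-1$ while $\eta^{(2)}$ gets $1$). Ruling out the basic coupling, or any particular coupling, is not the issue; the issue is that your first-order/one-site criteria at equal configurations cannot detect the failure at all.

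The true obstruction necessarily involves two \emph{distinct} ordered configurations, i.e.\ it is an attractivity-type failure. For the order $-1<0<1$, take $\eta^{(1)}_0\equiv 0\leq\eta^{(2)}_0=\{0\}$ (a single $1$ at the origin) and $q_1\leq q_2<1$: the $q_2$-process creates a $-1$ at a neighbour $x$ of the origin with probability of order $\lambda(1-q_2)t$, while the $q_1$-process can never place a $-1$ at $x$; order preservation would force $\{\eta^{(2)}_t(x)=-1\}\subseteq\{\eta^{(1)}_t(x)=-1\}$, a contradiction valid for \emph{every} coupling. A similar two-configuration witness (an empty site next to a $1$ versus a $-1$ next to a $1$) handles the orderings in which $-1$ is not minimal. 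This is exactly what the paper encodes: it invokes Borrello's necessary and sufficient conditions for the existence of an order-preserving coupling, which are inequalities indexed by \emph{pairs} of ordered local configurations $(\alpha,\beta)\leq(\gamma,\delta)$, and exhibits for each ordering a pair violating condition (2.13) or (2.14); your appeal to the spin-system criterion of Liggett restricted to equal configurations is both the wrong tool for a three-state system and, as computed above, insufficient here. The proposal can be repaired by replacing the equal-start, single-site tests with such two-configuration witnesses (or by citing the Borrello criterion as the paper does), but as written the argument would fail for the natural ordering, which is the most important case.
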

The proof of Proposition \ref{Nomonot} is done in Section \ref{loa}.
\begin{rem}
    It follows that in Theorem \ref{Mainresult}, one cannot rely on a monotonicity argument to prove that the phase transition in $p$ is sharp in the sense that: for $\lambda>\lambda_c(d)$, there is a critical parameter $p_c(\lambda)\in [\lambda_c(d)/\lambda,1)$ such that for any $p<p_c(\lambda)$, an $IS(\lambda,p)$ process becomes extinct and for any $p>p_c(\lambda)$, an $IS(\lambda,p)$ process survives. 
\end{rem}

\subsection{Two other processes}\label{TIP}
\subsubsection*{The contact process}
Recall that the contact process on $\Z^d$ with parameter $\lambda$ is an interacting particle system on the state space $\{0,1\}^{\Z^d}$, whose transition rates at $x$ for a current configuration $\zeta$ are given by:
\begin{equation}
    0\rightarrow 1: ~\text{at rate  } \lambda n_1(x,\zeta),~ ~ \text{and  } 1\rightarrow 0:~\text{at rate  }1.
\end{equation}
The contact process on $\Z^d$ exhibits a phase transition in the parameter $\lambda$ (we refer to \cite[Part 1, section 2]{Liggett2}) : there is a $\lambda_c(d)\in (0,\infty)$ such that for any $\lambda \leq \lambda_c(d)$, the contact process with parameter $\lambda$ almost surely reaches the empty configuration (extinction), and for $\lambda>\lambda_c(d)$, with strictly positive probability, the process never reaches the empty configuration (survival). We refer to \cite{IPS} and \cite{Liggett2} for detailed reviews on the contact process.\medskip

For $\zeta\in \Omega$, we denote by $\textbf{P}_{\zeta}^{\lambda}$ the probability measure on the space of continuous time trajectories on $\Omega$ induced by $(\zeta_t)_{t\geq 0}$, when $\zeta_0=\zeta$.

\begin{rem}
    Note that if $p=1$, an $IS(\lambda,p)$ process starting from a configuration in $\{0,1\}^{\Z^d}$ evolves according to a contact process on $\Z^d$ with parameter $\lambda$.
\end{rem}
\begin{thm}\label{Compcontact}
    For any $(\lambda,p)\in (0,\infty)\times[0,1]$, and $(\e_0,\zeta_0)\in \Omega \times \{0,1\}^{\mathbb{Z}^d}$ such that $\e_0\leq \zeta_0$, there exists a coupling $(\e_t,\zeta_t)_{t\geq 0}$, on $\Omega \times \{0,1\}^{\mathbb{Z}^d}$, such that $(\e_t)_{t\geq 0}$ is an $IS(\lambda,p)$ process starting from $\e_0$, $(\zeta_t)_{t\geq 0}$ a contact process with birth rate $\lambda p$ starting from $\zeta_0$, and almost surely,
    \begin{equation*}
        \forall t \geq 0,~~\e_t\leq \zeta_t,
    \end{equation*}
    for the order $-1<0<1$.
\end{thm}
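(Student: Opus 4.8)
The natural approach is to build both processes on a single probability space via a graphical representation (Harris construction) and check that the coupled dynamics preserves the partial order $\e_t \le \zeta_t$ for the ordering $-1 < 0 < 1$. So I would first fix, for the contact process with rate $\lambda p$, the standard family of Poisson processes: to each site $x$ a rate-$1$ "death" process (giving $1 \to 0$ recovery marks for $\zeta$), and to each ordered pair of neighbours $(y,x)$ a rate-$\lambda p$ "birth arrow" process (a $1$ at $y$ pushes a $1$ onto $x$ if $x$ is empty). For the $IS$ process I would reuse the \emph{same} death clocks and the \emph{same} birth-arrow clocks, but additionally attach to each arrow an independent $\mathrm{Bernoulli}(p)$ label decided at the arrow time (or, equivalently, a second Poisson process of rate $\lambda(1-p)$ of "sterile arrows" independent of everything, together with the rate-$\lambda p$ "fertile arrows" already used by $\zeta$). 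The effect is: at a fertile-arrow time from $y$ to $x$, if $\e(y)=1$ and $\e(x)=0$ then $x\to 1$ in $\e$; at a sterile-arrow time, under the same condition $x \to -1$; and at a death time at $x$, both $\e(x)$ and $\zeta(x)$ are set to $0$ (resp.\ reset to $0$ if already $0$). Since all rates are bounded this graphical construction is well-defined by \cite[Chapter~1, Theorem~3.9]{IPS}, the resulting $(\e_t)$ is an $IS(\lambda,p)$ process and $(\zeta_t)$ a contact process with rate $\lambda p$, and one couples the initial conditions with the given $\e_0 \le \zeta_0$.

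**The monotonicity check.** It then remains to verify that the order is preserved through each type of transition, which is a finite case analysis site by site (the order is coordinatewise so it suffices to look at the site being updated, noting only that the \emph{same} neighbourhood information is used). The only way $\e_t \le \zeta_t$ could fail is if at some site $x$ we get $\e_t(x) > \zeta_t(x)$, i.e.\ $\e_t(x) = 1$ and $\zeta_t(x) \in \{-1,0\} = \{0\}$, since $\zeta$ only takes values in $\{0,1\}$. A death mark at $x$ sends both to $0$, preserving the order. A sterile arrow only ever lowers $\e(x)$ to $-1 \le 0 \le \zeta(x)$, harmless. A fertile arrow from $y$ to $x$ can set $\e(x) \to 1$ only if $\e(y)=1$ and $\e(x)=0$; by the inductive hypothesis $\e_{t^-} \le \zeta_{t^-}$ gives $\zeta(y) \ge \e(y) = 1$, hence $\zeta(y)=1$, so the same arrow also fires in $\zeta$, and since $\zeta(x) \ge \e(x) = 0$ either $\zeta(x)$ was already $1$ or it is set to $1$; in all cases the post-jump values satisfy $\e(x) = 1 \le \zeta(x) = 1$. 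Thus no transition can create a violation, and since violations would have to appear at a jump time (the configuration is piecewise constant), the order is maintained for all $t \ge 0$ almost surely.

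**Main obstacle.** There is no serious obstacle — this is the ``easy half'' of the paper, and the point of Theorem~\ref{Compcontact} is exactly that dropping the $\mathrm{Bernoulli}(p)$ coin turns fertile births into contact births, overcounting by folding sterile individuals below $0$. The one thing to be careful about is the coupling of the randomness: the fertile-birth arrows of the $IS$ process must be \emph{identified} with the birth arrows of the contact process (not merely equal in law), so that ``$\e$ births $\Rightarrow$ $\zeta$ births at the same arrow'' holds pathwise; the sterile arrows and the Bernoulli labels are the genuinely extra randomness and must be independent of the contact-process clocks. A minor bookkeeping point is that I should phrase the construction so the generator of the constructed $(\e_t)$ matches \eqref{genIS} exactly — in particular that at an empty site $x$ the total rate of becoming $1$ is $\lambda p\, n_1(x,\e)$ and of becoming $-1$ is $\lambda(1-p)\, n_1(x,\e)$ — which is immediate from superposing, over neighbours $y$, the fertile arrows (rate $\lambda p$ each) and sterile arrows (rate $\lambda(1-p)$ each). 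I would then simply note that this graphical representation is the one set up in detail in Section~\ref{graphrepr}, and refer forward to it rather than repeating the construction.
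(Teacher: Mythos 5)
Your proposal is correct and is essentially the paper's own argument: the basic coupling on the Harris graphical representation, sharing the rate-$\lambda p$ birth arrows and the death clocks between the two processes while the rate-$\lambda(1-p)$ sterile arrows act only on the $IS$ process, followed by the same case-by-case check that no jump can break $\e_t\leq\zeta_t$ for $-1<0<1$. The only cosmetic difference is that you kill $1$'s and $-1$'s with a single shared death clock per site, whereas the paper uses separate marks for deaths of $1$'s and of $-1$'s; both give the correct marginals and preserve the order.
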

The proof of Theorem \ref{Compcontact}  is done using the basic coupling on the graphical representation and we refer to Section \ref{COUPLINGS} for more details.

\subsubsection*{A contact process with dynamic random environment}
\begin{Def}
    For $\lambda>0$ and $p\in [0,1]$, the $Spont(\lambda,p)$ process is the Markovian process $(\xi_t)_{t\geq 0}$ on the state space $\Omega$ whose transition rates at site $x\in \Z^d$ for a current configuration $\xi$ are given by:
     \begin{equation}
        0 \rightarrow 1:~\mathrm{at~ rate~ }  \lambda p n_1(x,\xi),~~0\rightarrow -1:~\mathrm{at~ rate~ } 2d\lambda (1-p) ~~ \mathrm{and}~~1,-1\rightarrow 0:~\mathrm{at~ rate~ } 1.
    \end{equation}
\end{Def}
In other words, the dynamics is that of a contact process with parameter $\lambda p$, where empty sites become randomly blocked, i.e., no sites in state $1$ can reproduce on neighbouring blocked sites, before these blocked sites flip back to $0$.

The infinitesimal generator of a $Spont(\lambda,p)$ process is given by: for any cylinder function $f$ on $\Omega$ and configuration $\xi\in \Omega$,
\begin{equation}\label{genspont}
        \begin{split}
        \mathbb{L}f(\xi) &= \sum_{x\in \Z^d} \sum_{i\in \{-1,0,1\}} c_{spont}(x,\xi,i)\big[f(\sigma^{i,x}\xi) -f(\xi)\big],
    \end{split}
\end{equation}
where $c_{spont}(x,\xi,i)$ is the transition rate to go from state $\xi(x)$ to $i$. The transition rates are given by:
\begin{equation}\label{ratesgenspont}
\begin{split}
        &c_{spont}(x,\xi,0) = 1,\text{  if  } \xi(x)\in \{-1,1\},\\
        &c_{spont}(x,\xi,1) = \lambda p n_1(x,\xi),\text{  if  } \xi(x)=0,\\
        &c_{spont}(x,\xi,-1)= 2d\lambda (1-p),\text{  if  } \xi(x)=0.
\end{split}
\end{equation}
Since all the rates in \eqref{ratesgenspont} are bounded, by \cite[Chapter 1, Theorem 3.9]{IPS} there exists a unique Markov process whose dynamics is induced by the infinitesimal generator \eqref{genspont}.\medskip

For $\xi\in \Omega$, we will denote by $\Tilde{\mathbb{P}}_{\xi}^{\lambda,p}$ the probability measure on the space of continuous time trajectories on $\Omega$ induced by $(\e_t)_{t\geq 0}$, when $\xi_0=\xi$.\medskip

The following result tells us that a $Spont(\lambda,p)$ process is stochatically dominated by an $IS(\lambda,p)$ process:
\begin{thm}\label{CouplingISwithSpont}
    For any $(\lambda,p)\in (0,\infty)\times[0,1]$, and $(\xi_0,\e_0)\in \Omega^2$ such that $\e_0\leq \zeta_0$, there exists a coupling $(\xi_t,\eta_t)_{t\geq 0}$, on $\Omega^2$ such that $(\xi_t)_{t\geq 0}$ is an $Spont(\lambda,p)$ process starting from $\xi_0$, $(\eta_t)_{t\geq 0}$ an $IS(\lambda,p)$ process starting from $\eta_0$ such that almost surely,
    \begin{equation*}
        \forall t\geq 0,~~~\xi_t\leq \eta_t
    \end{equation*}
    for the order $-1<0<1$.
\end{thm}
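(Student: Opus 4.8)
The plan is to construct the coupling via the basic coupling on a common graphical representation, exactly as suggested in the sentence following Theorem \ref{Compcontact}, and then verify by induction on the ordered sequence of graphical events that the partial order $\xi_t \leq \eta_t$ (for the order $-1<0<1$) is preserved. First I would set up the graphical representation shared by both processes on $\Z^d \times [0,\infty)$: for each oriented edge $(x,y)$ with $x\sim y$ a Poisson process of "birth arrows" of rate $\lambda p$, for each site $x$ a Poisson process of "death marks" of rate $1$, and for each site $x$ an additional Poisson process of "spontaneous blocking marks" of rate $2d\lambda(1-p)$. The $Spont(\lambda,p)$ process $(\xi_t)$ uses all three families of marks: a birth arrow from $x$ to $y$ turns $\xi(y)$ from $0$ to $1$ if $\xi(x)=1$; a death mark at $x$ turns $\xi(x)$ to $0$; a spontaneous blocking mark at $x$ turns $\xi(x)$ from $0$ to $-1$. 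The $IS(\lambda,p)$ process $(\eta_t)$ uses the same birth arrows and death marks, but reinterprets a birth arrow from $x$ to $y$: if $\eta(x)=1$ and $\eta(y)=0$, we use an independent auxiliary fairness coin (which can be encoded by marking each birth arrow, independently, "fertile" with probability $p$ and "sterile" with probability $1-p$) to decide whether $\eta(y)$ becomes $1$ or $-1$; $(\eta_t)$ ignores the spontaneous blocking marks entirely. One checks that under this prescription $(\xi_t)$ has the correct $Spont(\lambda,p)$ rates and $(\eta_t)$ the correct $IS(\lambda,p)$ rates.

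Next I would run the standard percolation/monotonicity argument: on any finite time interval and finite region, there are only finitely many marks, so it suffices to check that each individual mark preserves $\xi \leq \eta$ assuming it held just before. I would go through the cases mark by mark. A death mark at $x$ sets both coordinates to $0$, which preserves the order (indeed $0$ is the middle element, so whatever the relation of $\xi(x),\eta(x)$ was, afterwards they are equal). A spontaneous blocking mark at $x$ affects only $\xi$: it can only change $\xi(x)$ from $0$ to $-1$, i.e. it can only decrease $\xi(x)$, so $\xi(x)\leq \eta(x)$ is maintained. The delicate case is a birth arrow from $x$ to $y$. Here one uses $\xi(x)\leq\eta(x)$: if $\xi(x)=1$ then also $\eta(x)=1$, so a birth is "attempted" along this arrow in both processes (when the target is empty). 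If additionally $\xi(y)=0$: since $\xi(y)\leq\eta(y)$, either $\eta(y)=0$ or $\eta(y)=1$; if $\eta(y)=1$ nothing changes in $\eta$ and $\xi(y)$ goes to $1=\eta(y)$, fine; if $\eta(y)=0$, both $\xi(y)$ and $\eta(y)$ are updated using the \emph{same} fertile/sterile label on the arrow, hence both become $1$ or both become $-1$, so equality (hence the order) is preserved. If $\xi(y)=-1$, the arrow does nothing to $\xi$, and whatever it does to $\eta$ (set to $1$, $-1$, or leave it, depending on $\eta(y)$) keeps $\eta(y)\geq -1 = \xi(y)$. If $\xi(y)=1$ then $\eta(y)=1$ too, the arrow does nothing in either process. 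In every case the relation survives; the key structural point making this work is that a birth into an occupied-or-blocked site is a no-op, and the only way $\xi$ can "get ahead" of $\eta$ — namely a sterile birth landing on a site where $\eta$ put a fertile individual — cannot create a violation because $-1$ is below everything.

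Finally I would assemble these local checks into the global statement: order the marks in $[0,T]\times$(relevant region) by time, apply the single-mark preservation inductively starting from $\xi_0\leq\eta_0$, and conclude $\xi_t\leq\eta_t$ for all $t\in[0,T]$; letting $T\to\infty$ and invoking the uniqueness of the processes from their generators \eqref{genspont} and \eqref{genIS} (via \cite[Chapter 1, Theorem 3.9]{IPS}) together with the standard graphical-construction argument that the finite-region approximations converge, we get the coupling on all of $\Omega^2$ with $\xi_t\leq\eta_t$ almost surely for all $t\geq 0$. I expect the main obstacle to be purely presentational rather than conceptual: one must state the graphical representation carefully enough that both the spontaneous blocking marks of $Spont$ and the independent fertile/sterile labels of $IS$ live on the same probability space without interfering, and then be scrupulous in the birth-arrow case analysis — in particular noting that the hypothesis "$\e_0 \le \zeta_0$" in the statement should read "$\xi_0 \le \eta_0$", and that the reason the non-attractive $IS$ dynamics still admits this one-sided comparison is precisely that blocking in $Spont$ happens \emph{spontaneously at the maximal rate} $2d\lambda(1-p)$, dominating the at-most-$2d$-neighbour-driven sterile births in $IS$, so no coupling obstruction of the kind behind Proposition \ref{Nomonot} arises here.
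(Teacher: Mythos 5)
There is a genuine gap, and it sits exactly at the one place where this coupling is delicate. In your construction the $Spont$ blockings are driven by a family of \emph{independent} ``spontaneous blocking marks'' of rate $2d\lambda(1-p)$, while the sterile births of $IS$ are driven by fertile/sterile labels attached to the rate-$\lambda p$ birth arrows. These two sources of $-1$'s are independent of each other, so the dangerous transition is not synchronized: with positive probability $\eta$ places a $-1$ at a site $y$ (a sterile-labelled arrow from a fertile neighbour) at a moment when $\xi(y)=0$ and no blocking mark occurs at $y$, leaving $\eta(y)=-1<0=\xi(y)$ and destroying the order. Your case analysis avoids this only by asserting that ``both $\xi(y)$ and $\eta(y)$ are updated using the same fertile/sterile label,'' but that contradicts your own prescription (you declared that $\xi$ ignores the labels and only uses the blocking marks); and if $\xi$ \emph{did} react to sterile labels on arrows, its $0\to-1$ rate would acquire a neighbour-dependent term and no longer be the constant $2d\lambda(1-p)$ of \eqref{ratesgenspont}. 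There is also a rate error for the $IS$ marginal: sub-labelling arrows of rate $\lambda p$ with probabilities $p$ and $1-p$ gives birth rates $\lambda p^2 n_1$ and $\lambda p(1-p)n_1$, i.e.\ an $IS(\lambda p,p)$ process, not the rates $\lambda p\,n_1$ and $\lambda(1-p)n_1$ of \eqref{ratesgen}.

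The repair is the paper's construction: use two independent arrow families per oriented edge, $N^1_{x,y}$ of rate $\lambda p$ (fertile arrows, used identically by both processes) and $N^2_{x,y}$ of rate $\lambda(1-p)$ (sterile arrows), plus the death clocks. At an $N^2_{x,y}$ event, $\eta(y)$ flips $0\to-1$ only if $\eta(x)=1$, whereas $\xi(y)$ flips $0\to-1$ \emph{regardless of the state of $x$}; summing over the $2d$ incoming edges gives exactly the spontaneous rate $2d\lambda(1-p)$ for $Spont$, and --- crucially --- every sterile birth in $\eta$ is automatically accompanied by a block in $\xi$ at the same space-time point, so the order $-1<0<1$ survives. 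Your closing intuition (``blocking in $Spont$ happens at the maximal rate, dominating the neighbour-driven sterile births'') is the right heuristic, but rate domination alone is not a coupling: the $-1$ events must be realized from the \emph{same} Poisson processes, which is precisely what the shared $N^2_{x,y}$ family achieves. The rest of your argument (mark-by-mark order preservation, finite-range induction, the remark that the hypothesis should read $\xi_0\le\eta_0$) is fine once the construction is corrected.
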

The proof of Theorem \ref{CouplingISwithSpont} is done using the basic coupling on the graphical representation and we refer to Section \ref{COUPLINGS} for more details.\medskip

The $Spont$ process satisfies the following.
\begin{prop}\label{MonotonicityofSpont}
Fix $\lambda>0$ and $p_1\leq p_2$ in $[0,1]$. There exists a coupling $(\xi_t^{(1)},\xi_t^{(2)})_{t\geq 0}$ on $\Omega^2$ such that $(\xi_t^{(1)})_{t\geq 0}$, resp. $(\xi_t^{(2)})_{t\geq 0}$ is a $Spont(\lambda,p_1)$,  resp. $Spont(\lambda,p_2)$, and almost surely, one has
\begin{equation}\label{monot}
    \forall t \geq 0,~~~\xi_0^{(1)}\leq \xi_0^{(2)} \Rightarrow~ ~~  \xi_t^{(1)}\leq \xi_t^{(2)},
\end{equation}
for the order $-1<0<1$.
\end{prop}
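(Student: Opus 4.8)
The plan is to build the coupling via a basic (monotone) coupling on a single graphical representation, exactly in the spirit of the couplings used for Theorems \ref{Compcontact} and \ref{CouplingISwithSpont}, and then to check pathwise that the order $\xi_t^{(1)}\leq\xi_t^{(2)}$ is preserved by every type of transition. The key point that makes this work — and that fails for the $IS$ process, cf.\ Proposition \ref{Nomonot} — is that in the $Spont$ process the creation of $-1$'s happens \emph{spontaneously}, at a rate $2d\lambda(1-p)$ that does \emph{not} depend on the configuration, only on $p$. So a larger $p$ means a \emph{smaller} rate of blocking and a \emph{larger} rate of birth of $1$'s, and both effects push the process up in the order $-1<0<1$.

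\medskip

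\textbf{Step 1: the graphical construction.} On each site $x\in\Z^d$ I place: a Poisson process $D_x$ of rate $1$ (death marks, sending $\{-1,1\}\to 0$); for each oriented edge $(y,x)$ with $y\sim x$, a Poisson process $B_{y\to x}$ of rate $\lambda$ together with i.i.d.\ uniform$[0,1]$ labels $U$ attached to its points (a birth arrow from $y$ is ``used'' at a point with label $U$ to produce a $1$ if $U\le p$, here a subtlety in how $p_1$ vs $p_2$ interact, see below); and a Poisson process $R_x$ of rate $2d\lambda$ of ``blocking attempts'', again with i.i.d.\ uniform$[0,1]$ labels, where a blocking attempt with label $V$ turns $x$ from $0$ to $-1$ provided $V\le 1-p$. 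To couple $p_1\le p_2$: at a birth arrow $(y,x)$ with label $U$, the update is $0\to 1$ for \emph{both} processes if $U\le p_1$, and $0\to 1$ for the process with $p_2$ only (nothing for the $p_1$-process, which would instead need to check its own blocking clock) if $p_1<U\le p_2$; for a blocking attempt at $x$ with label $V$, it is $0\to -1$ for \emph{both} if $V\le 1-p_2$, and $0\to -1$ for the $p_1$-process only if $1-p_2<V\le 1-p_1$. With this arrangement the $p_2$-process sees (weakly) more $1$-births and (weakly) fewer blockings than the $p_1$-process, on the same randomness; deaths are shared identically. One must also make precise that a birth arrow into a site that is \emph{not} in state $0$ does nothing, and a blocking attempt at a site not in state $0$ does nothing; this is the standard reading of the contact-type graphical representation.

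\medskip

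\textbf{Step 2: propagation of order.} Assume $\xi_0^{(1)}\le\xi_0^{(2)}$ and proceed by induction on the (a.s.\ locally finite) sequence of graphical marks affecting a given finite window. At a mark we compare the two post-transition configurations. A death mark at $x$ maps both $\xi^{(1)}(x),\xi^{(2)}(x)$ to $0$ or leaves them unchanged in a way that clearly preserves $\le$ (if $\xi^{(1)}(x)\le\xi^{(2)}(x)$, then after sending any value in $\{-1,1\}$ to $0$ coordinatewise the inequality still holds: check the finitely many cases $(-1,-1),(-1,0),(-1,1),(0,0),(0,1),(1,1)$). At a blocking attempt at $x$: if $\xi^{(2)}(x)=0$ it may become $-1$ only when $\xi^{(1)}(x)$ also becomes $-1$ (since $V\le1-p_2\Rightarrow V\le 1-p_1$), preserving equality/order; if $\xi^{(2)}(x)\ne0$ nothing happens to $\xi^{(2)}$, while $\xi^{(1)}(x)$, being $\le\xi^{(2)}(x)$, is either already $\le\xi^{(2)}(x)$ after a possible $0\to-1$, and $-1$ is the bottom element, so order is kept. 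At a birth arrow $(y,x)$: the only danger is $\xi^{(1)}(x)$ jumping to $1$ while $\xi^{(2)}(x)$ does not. But a birth into $x$ in the $p_1$-process requires $\xi^{(1)}(x)=0$, $\xi^{(1)}(y)=1$, and $U\le p_1$; then $\xi^{(2)}(y)\ge\xi^{(1)}(y)=1$ forces $\xi^{(2)}(y)=1$, and $U\le p_1\le p_2$, so \emph{either} $\xi^{(2)}(x)=0$ and it also jumps to $1$, \emph{or} $\xi^{(2)}(x)\in\{-1,1\}$ in which case $\xi^{(2)}(x)=1$ already (it cannot be $-1$ since then $\xi^{(1)}(x)\le -1$ contradicts $\xi^{(1)}(x)=0$). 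Conversely a birth in the $p_2$-process with $p_1<U\le p_2$ only raises $\xi^{(2)}(x)$, which is harmless. Hence after every mark the order is restored, and since a.s.\ only finitely many marks occur in $[0,t]$ inside any finite box, $\xi_t^{(1)}\le\xi_t^{(2)}$ for all $t\ge0$, which is \eqref{monot}.

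\medskip

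\textbf{Main obstacle.} The real subtlety — and essentially the only thing to get right — is the \emph{joint} use of the uniform labels on birth arrows and on blocking attempts so that increasing $p$ simultaneously increases births of $1$ and decreases blockings, on the \emph{same} noise; one must verify there is no hidden case where a blocking event for the larger-$p$ process would block a site that the smaller-$p$ process keeps open and then populates with a $1$, breaking the order. The case analysis in Step 2 shows this cannot happen precisely because the blocking rate is configuration-independent, so the ``extra'' blockings only ever occur for the \emph{smaller}-$p$ process and always land on sites where the larger-$p$ process's state is $\ge -1$ anyway. Formally one should also invoke \cite[Chapter 1, Theorem 3.9]{IPS} (already cited for well-posedness) to guarantee that this pathwise construction indeed realizes the two marginals as genuine $Spont(\lambda,p_1)$ and $Spont(\lambda,p_2)$ processes, and a standard comment that the graphical representation is well-defined since all rates are bounded.
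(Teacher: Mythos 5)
Your proof is correct and follows essentially the same route as the paper: a basic coupling built on a joint graphical representation in which the shared randomness makes the $p_2$-process see more $1$-births and fewer spontaneous blockings (your uniform-label thinning is equivalent to the paper's superposition of independent Poisson processes with rates $\lambda p_1$, $\lambda(p_2-p_1)$, $2d\lambda(1-p_2)$, $2d\lambda(p_2-p_1)$), followed by a case check that each mark preserves the order $-1<0<1$. The only cosmetic difference is your single shared death clock versus the paper's separate clocks for deaths of $1$'s and $-1$'s, which does not affect the marginals or the order preservation; your explicit case analysis in Step 2 is exactly the verification the paper leaves to the reader.
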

The proof of Proposition \ref{MonotonicityofSpont} is postponed to Section \ref{loa} and relies on the basic coupling.

\begin{thm}{Phase transition for the $Spont$ process.}\label{Phaseforspont}

\noindent Fix $\lambda>\lambda_c(d)$. The $Spont$ process with birth rate $\lambda$ exhibits a non trivial phase transition in the parameter $p$: there exists $p_c^{spont}(\lambda)\in [\lambda_c(d)/\lambda,1)$, such that
\begin{itemize}
    \item [(i)] For any $p<p_c^{spont}(\lambda)$, the process $Spont(\lambda,p)$ becomes extinct.
    \item [(ii)] For any $p>p_c^{spont}(\lambda)$, the process $Spont(\lambda,p)$ survives.
\end{itemize}
    
\end{thm}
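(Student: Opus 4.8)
The plan is to establish the two items separately and then define $p_c^{spont}(\lambda)$ as the supremum of those $p$ for which the process becomes extinct; Proposition \ref{MonotonicityofSpont} guarantees that this supremum cleanly separates the extinction and survival regimes, so the only real content is (i) for $p$ near $\lambda_c(d)/\lambda$ and (ii) for $p$ near $1$, together with the monotonicity already in hand. For item (i), I would note that a $Spont(\lambda,p)$ process is stochastically dominated by a contact process with birth rate $\lambda p$: one simply ignores the blocking mechanism (a blocked site only forbids births, so removing it can only help the population), which can be made rigorous on the graphical representation by the basic coupling exactly as in Theorem \ref{Compcontact}. Hence if $\lambda p\leq\lambda_c(d)$, i.e. $p\leq\lambda_c(d)/\lambda$, the dominating contact process dies out and therefore so does $Spont(\lambda,p)$. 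This already shows $p_c^{spont}(\lambda)\geq\lambda_c(d)/\lambda$, accounting for the left endpoint in the claimed interval.

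For item (ii), the heart of the matter, I would fix $\lambda>\lambda_c(d)$ and show that for $p$ sufficiently close to $1$ the process survives, via a renormalization/block argument comparing the graphical representation of $Spont(\lambda,p)$ to supercritical oriented site percolation, in the spirit of \cite{Coex} and \cite{CPRS} but carried out directly in dimension $d$. The scheme is: choose a large spatial scale $L$ and a large time scale $T$, and define inside each space-time box of size $\sim L$ in space and $\sim T$ in time a ``good event'' $G$ — roughly, that starting from a single $1$ (or a suitable seed of $1$'s) in the central region, the $Spont$ dynamics restricted to the enlarged box produces, at time $T$, a configuration with enough $1$'s in each of the neighbouring block regions to re-seed them. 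If $\p(G)$ can be made close to $1$ uniformly, then declaring a block ``open'' when the corresponding good event occurs yields a $1$-dependent oriented percolation with parameter close to $1$, which percolates, and by the standard comparison (e.g. \cite[]{IPS}) this forces survival of the fertile population. Because the events in disjoint-enough boxes depend on disjoint portions of the Poisson clocks in the graphical representation, the $1$-dependence and the comparison theorem apply in the usual way.

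The key step — and the main obstacle — is proving that $\p(G)$ is close to $1$ when $\lambda>\lambda_c(d)$ and $p$ is close to $1$, \emph{without} monotonicity and in arbitrary dimension. Here I would exploit that when $p=1$ the $Spont(\lambda,1)$ process is exactly a supercritical contact process with parameter $\lambda>\lambda_c(d)$, for which the block event $G$ is known to have probability arbitrarily close to $1$ on large scales; then a perturbation argument should control the effect of $p<1$. Concretely, on a finite space-time box the graphical representation of $Spont(\lambda,p)$ differs from that of the contact process only through the blocking marks, which arrive at rate $2d\lambda(1-p)$ per site and, crucially, a blocked site flips back at rate $1$ so each block lasts only $O(1)$ time; thus as $p\to 1$ the expected number of (space-time) blocking events in a fixed box tends to $0$, and with probability tending to $1$ there are none, on which event the two processes coincide in the box. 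Rather than grinding through a crude first-moment bound, I would follow the paper's stated simplification: use the self-duality of the graphical representation to rewrite $G$ as an intersection of finitely many events — each asserting that a dual path from a target site avoids being cut — so that each can be made to have probability close to $1$ by first taking the contact-process scales large (using supercriticality) and then taking $p$ close enough to $1$ that the rare blocking marks almost surely miss the relevant dual paths. Assembling these estimates gives $\p(G)$ close to $1$, completing (ii); the final definition of $p_c^{spont}(\lambda)$ and the monotonicity argument then package (i) and (ii) into the stated phase transition.
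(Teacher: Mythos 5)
Your overall architecture matches the paper's: monotonicity in $p$ (Proposition \ref{MonotonicityofSpont}, giving Corollary \ref{increaseofproba}) to define $p_c^{spont}(\lambda)$ and separate the two phases, the coupling with a contact process of rate $\lambda p$ to get $p_c^{spont}(\lambda)\geq \lambda_c(d)/\lambda$, and a block/renormalization comparison with dependent oriented percolation, with duality entering the block estimate, to get survival for $p$ close to $1$. The monotonicity and extinction parts are fine.

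The genuine gap is in the survival block estimate, which is the heart of the theorem. Your key claim -- that on a fixed space-time box, with probability tending to $1$ as $p\to1$ there are no blocking marks, ``on which event the two processes coincide in the box'' -- fails as stated. First, the contact process embedded in the graphical representation of $Spont(\lambda,p)$ has rate $\lambda p$, not $\lambda$ (the birth marks are thinned too), so the perturbation must be set up with scales valid uniformly for the rate-$\lambda p$ processes. Second, and more seriously, the comparison theorem requires the good-event bound uniformly over all $\xi\in H$, and any class $H$ that the construction can re-enter at later renormalization steps necessarily contains configurations with $-1$'s inside the big box (they are produced by the dynamics); the absence of \emph{new} blocking marks does not remove them, so $Spont$ and the contact process do not coincide on your event, and your good event as described does not guarantee that the re-seeded regions are again $-1$-free, which is needed to iterate. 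This is precisely what the paper's extra structure handles: $H$ forbids $-1$'s in a buffer region $I=[-2N,2N]^d$ around a $[-K,K]^d$ seed whose contact process survives with probability $>1-\gamma/2$; the time horizon is split $T=T_1+T_2$ so that with high probability all initial $-1$'s outside $I$ die before $T_1$ while, by the linear-growth bound of Proposition \ref{ALL}, the $1$'s have not yet left $I$; only then are the coupling with the fully occupied contact process (Proposition \ref{Schinaz}) and the duality relation \eqref{Dualrelationcontact} used to show the configuration at time $T$ lands in the translates of $H$. You also implicitly need that the process restricted to the box is dominated by the true process (the paper's Lemma \ref{Restriction}), which is what makes the good event simultaneously box-measurable and sufficient for wetness, and which holds for $Spont$ exactly because $-1$'s arise spontaneously; your sketch never addresses this. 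A minor further point: the resulting percolation is $M$-dependent with $M$ set by the box aspect ratios (hence Theorem \ref{Criticvalueperco}), not $1$-dependent.
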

The proof of Theorem \ref{Phaseforspont} is postponed to Section \ref{PMR} and relies on the fact that for $Spont$, contrary to $IS$, monotonicity holds, as stated in Proposition \ref{MonotonicityofSpont}.

\begin{rem}
    Taking $p_1=p_2$ in Proposition \ref{MonotonicityofSpont}, we get that the $Spont$ process is attractive in $p$. This means that one can couple two $Spon(\lambda,p)$ processes with same parameter $p$, so that if one is smaller than the other, this remains true at any later time.
\end{rem}

\subsection{Proof of Theorem \ref{Mainresult}}
Collecting the results stated in Section \ref{TIP}, we are in position to prove Theorem \ref{Mainresult}.\medskip

\begin{itemize}
    \item [(i)]By Theorem \ref{Compcontact}, we can consider a coupling $(\e_t,\zeta_t)$ between an $IS(\lambda,p)$ process and a contact process with parameter $\lambda p$, both starting from the configuration $\{0\}$, such that
\begin{equation*}
    A(\e_t) \subset A(\zeta_t)~~~a.s.~~~\forall t \geq 0.
\end{equation*}
  Therefore,
\begin{equation}\label{uppper}
    \mathbb{P}_{\{0\}}^{\lambda,p}\Big(\forall t,~  A(\e_t)\neq \emptyset \Big) \leq \textbf{P}_{\{0\}}^{\lambda p}\Big(\forall t,~  A(\zeta_t)\neq \emptyset \Big).
\end{equation}
It follows that for $p\leq \lambda_c(d)/\lambda$, a contact process with parameter $\lambda p$ becomes extinct so the upper bound in \eqref{uppper}  is zero. Hence, the $IS(\lambda,p)$ process become extinct. In particular, this holds for any $p\in [0,1]$, as soon as $\lambda\leq \lambda_c(d)$.

\item[(ii)] As just seen, for $\lambda\leq \lambda_c$, for any $p\in [0,1]$, an $IS(\lambda,p)$ process become extinct. Fix $\lambda >\lambda_c$. By Theorem \ref{CouplingISwithSpont} we can consider a coupling $(\xi_t,\eta_t)$ between a $Spont(\lambda,p)$ process and an $IS(\lambda,p)$ both starting from the configuration $\{0\}$, such that
\begin{equation*}
    A(\xi_t) \subset A(\eta_t)~~~a.s.~~~\forall t \geq 0.
\end{equation*}
Therefore,
\begin{equation*}\label{lowwer}
    \Tilde{\mathbb{P}}_{\{0\}}^{\lambda,p}\Big(\forall t,~  A(\xi_t)\neq \emptyset \Big) \leq \mathbb{P}_{\{0\}}^{\lambda,p}\Big(\forall t,~  A(\e_t)\neq \emptyset \Big).
\end{equation*}
By Theorem \ref{Phaseforspont}, for $p>p_c^{spont}(\lambda)$, the process $Spont(\lambda,p)$ survives so the lower bound in \eqref{lowwer} is strictly positive. It turns out that for any $p>p_c^{spont}(\lambda)$ the $IS(\lambda,p)$ process survives and $p_c(\lambda)\in [\lambda_c(d)/\lambda,1)$. Taking $\check{p}(\lambda) =p_c(\lambda) $, the result follows.
\end{itemize}

\section{Graphical representations and couplings}\label{graphrepr}
The processes introduced in Section \ref{DandR} can be alternatively described by a graphical representation, which gives another way of defining their dynamics, through the use of Poisson point processes. This construction was introduced by Harris, see \cite{Harris}. The advantage of the graphical representation is that it allows to build very natural couplings between processes, and in particular, to prove some monotonicity properties. It also allows to compare the evolution of the set of occupied sites to that of a percolation cluster on an oriented percolation graph. This key feature will be central in the following Section.

\subsection{Graphical representations}

Fix $\lambda>0$ and $p\in [0,1]$. Consider the diagram $\Z^d\times \R_+$. Denote by $E(\Z^d)$ the set of oriented edges of $\Z^d$. To each element $(x,y)\in E(\Z^d) $ we associate the realization of a Poisson point process $(N_1^{x,y})_{(x,y)\in E(\Z^d)}$ of parameter $\lambda p$, as well as that of  a Poisson point process $(N_2^{x,y})_{(x,y)\in E(\Z^d)}$ of parameter $\lambda (1-p).$ Also, consider two families of realizations of Poisson point processes $(U_x)_{x\in \Z^d}$ and $(V_x)_{x\in \Z^d}$ with rate $1$. We suppose that all these Poisson processes are sampled independently. At each time event $t$ of $N_1^{x,y}$, resp. $N_2^{x,y}$, draw an arrow $\overset{1}{\longrightarrow}$, resp. $\overset{-1}{\longrightarrow}$,   in $\mathbb{Z}^d\times \R_+$ from $(x,t)$ to $(y,t)$, as illustrated by the blue, resp. red, arrows in Figure \ref{FigureGRIS}. At each time event $t$ of $U_x$, resp. $V_x$ , place a symbol $\times$, resp. $o$, at $(x,t)$, as illustrated by the black crosses, resp. red dots in Figure \ref{FigureGRIS}. From the arrows, dots and crosses, one can build the $IS(\lambda,p)$ process, the $Spont(\lambda,p)$ process and the contact process with parameter $\lambda$ as follows:
\begin{itemize}
    \item For $IS(\lambda,p)$: at each arrow $\overset{1}{\longrightarrow}$ in $\mathbb{Z}^d\times \R_+$ from $(x,t)$ to $(y,t)$, if $x$ is in state $1$ and $y$ in state $0$, the birth of a fertile individual occurs in $y$, that is, it flips to state $1$, see blue arrows in Figure \ref{FigureGRIS}. At each arrow $\overset{-1}{\longrightarrow}$ in $\mathbb{Z}^d\times \R_+$ from $(x,t)$ to $(y,t)$, if $x$ is in state $1$ and $y$ in state $0$, the birth of a sterile individual occurs in $y$, that is, it flips to state $-1$, see red arrows in Figure \ref{FigureGRIS}. At an event $\times$, resp. $o$, positioned at $(x,t)$, if $x$ was in state $1$, resp. $-1$ it flips to state $0$.
    \item For $Spont(\lambda,p)$: perform the same steps as for the graphical representation of $IS(\lambda,p)$ except that arrow $\overset{-1}{\longrightarrow}$ in $\mathbb{Z}^d\times \R_+$ from $(x,t)$ to $(y,t)$, if $y$ is in state $0$, it flips to state $-1$, regardless of the state of $x$.
    \item For the contact process with parameter $\lambda p$: perform the same steps as for the graphical representation of $IS(\lambda,p)$ and ignore the effects of the arrows $\overset{-1}{\longrightarrow}$.
\end{itemize}
\begin{figure}
        \begin{center}\label{FigureGRIS}
            \includegraphics[scale=0.35]{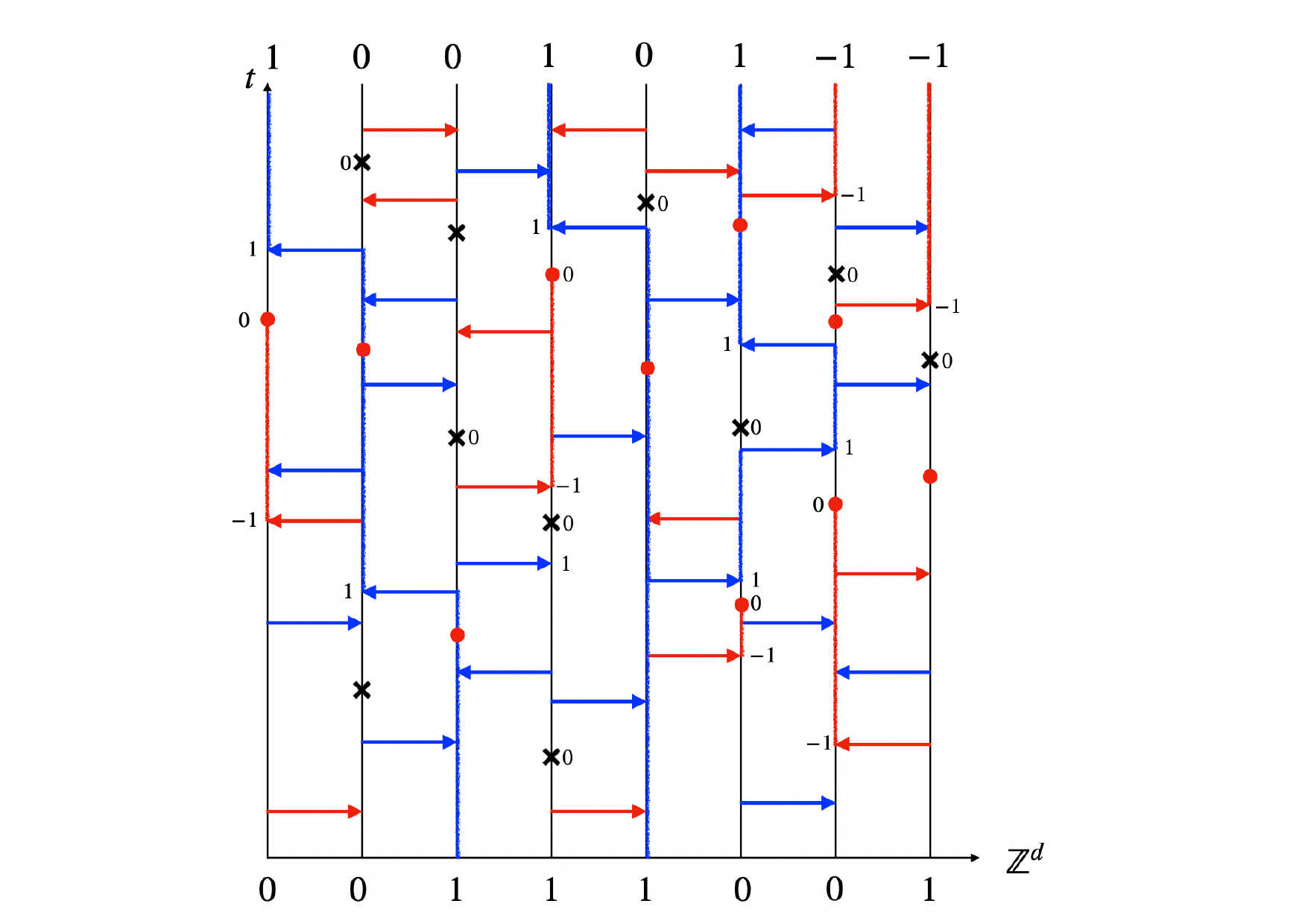}
        \end{center}
        \caption{\textbf{Graphical representation for a one dimensional $IS$ process.} The blue, resp. red arrows, correspond to births of fertile, resp. sterile individuals. The black crosses, resp. red dots, correspond to deaths of fertile, resp. sterile individuals. The blue paths correspond to space-time active paths along which individuals survive. The red segments correspond to blocked sites due to the presence of a sterile individual, that is, sites where fertile individuals cannot be born, until the sterile individual blocking the site dies.}
    \end{figure}

Given a graphical representation, an active path refers to a connected oriented path, moving along the time lines in the increasing direction of time and passing along arrows $\overset{1}{\longrightarrow}$, which crosses neither symbols $\times$ nor space-time points that are in state $-1$. Then, an $IS(\lambda,p)$, resp. $Spont(\lambda,p)$, resp. contact process starting from a configuration $\e$, resp. $\xi$, resp. $\zeta$, can be built from the percolation structure described above, following the indications given by the different space time events. In particular, if the process starts with $1$'s in a given set $\mathcal{A}(\e)$, resp. $\mathcal{A}(\xi)$, resp. $\mathcal{A}(\zeta)$, and $0$'s everywhere else, the set of $1$'s at time $t$ in $(\e_t)_{t\geq 0}$, resp. $(\xi_t)_{t\geq 0}$, resp. $(\zeta_t)_{t\geq 0}$  is given by:
\begin{equation*}
    \mathcal{A}_t(\e)= \Big\{y\in \Z^d,~ \exists x\in \mathcal{A}(\e)~ \text{such that there is an active path from}~(x,0)~\text{to}~~(y,t)\Big\},
\end{equation*}
\begin{equation*}
    \text{resp. } \mathcal{A}_t(\xi)= \Big\{y\in \Z^d,~ \exists x\in \mathcal{A}(\xi)~ \text{such that there is an active path from}~(x,0)~\text{to}~~(y,t)\Big\},
\end{equation*}
\begin{equation*}
    \text{resp. } \mathcal{A}_t(\zeta)= \Big\{y\in \Z^d,~ \exists x\in \mathcal{A}(\zeta)~ \text{such that there is an active path from}~(x,0)~\text{to}~~(y,t)\Big\}.
\end{equation*}
We refer to \cite[Section 2]{SaintFlour} for a proof that the graphical construction in the case of the contact process is well defined, and it adapts here for the graphical construction of the $IS(\lambda,p)$ process.

\subsection{Couplings}\label{COUPLINGS}
The graphical representations of processes allow to build the so called \textit{basic couplings}. They essentially consist in using some common Poisson Processes in their graphical representation. We use these coupling to prove Theorems \ref{Compcontact} and \ref{CouplingISwithSpont}.\medskip

\begin{proof}[Proof of Theorem \ref{Compcontact}]
    Take $\lambda>0$ and $p\in [0,1]$. Consider two configurations $(\e,\zeta)\in \Omega\times \{0,1\}^{\Z^d}$ such that $\e\leq \zeta$. Sample independent families of Poisson point processes $(N^1_{x,y})_{(x,y)\in E(\Z^{d})}$, $(N^2_{x,y})_{(x,y)\in E(\Z^{d})}$, $(U_x)_{x\in \Z^d}$, $(V_x)_{x\in \Z^d}$ with respective parameters $\lambda p$, $\lambda(1-p)$ and $1$. Deduce the evolution  of an $IS(\lambda,p)$ process $(\e_t)_{t\geq 0}$ starting from $\e_0$, and that of a contact process $(\zeta_t)_{t\geq 0}$ with parameter $\lambda p$ starting from $\zeta_0$, by using their graphical representation and using the same Poisson point processes for that. 
    
    When a site $x\in \Z^d$ flips from $0$ to $1$ in $(\e_t)_{t\geq 0}$, so does this happen for $(\zeta_t)_{t\geq 0}$, if $x$ was in state $0$ in $(\zeta_t)_{t\geq 0}$. Indeed, for such a flip to happen at $s>0$, there must be a $y\sim x$, such that an arrow $\overset{1}{\longrightarrow}$ is produced by $N^1_{y,x}$ for the graphical construction of $IS(\lambda,p)$, and such that $\e_{s^{-}}(y)=1$. This implies that $\zeta_{s^{-}}(y)=1$, so the arrow $\overset{1}{\longrightarrow}$ is also produced by $N^1_{y,x}$ for the graphical construction of the contact process with parameter $\lambda p$. Thus, $x$ flips from $0$ to $1$ in $(\zeta_t)_{t\geq 0}$. Furthermore, as we use the same Poisson point processes $(V_x)_{x\in \Z^d}$ for the flips from $1$ to $0$, such flips in $(\zeta_t)_{t\geq 0}$ happen simultaneously in $(\e_t)_{t\geq 0}$ at $x$, provided that $\zeta_t(x)=\e_t(x)=1$. If $\e_t(x)=0$ and $\zeta_t(x)=1$, the flip from $1$ to $0$ in $x$ happens only in $\zeta$. Therefore, flips from $0$ to $1$ in $(\e_t)_{t\geq 0}$ and flips from $1$ to $0$ in $\zeta_t$ can never disrupt the order, so the basic coupling is order preserving.

    \medskip
    
    In terms of transition rates, the basic coupling between $IS(\lambda,p)$ and the contact process with parameter $\lambda$ goes as follows. At site $x\in \Z^d$, for a current configuration $(\e,\zeta)$ such that $\e\leq \zeta$:
    \begin{equation}\label{CouplingIScontact}
    \begin{split}
        &(1,1) \rightarrow (0,0): 1,~~~~~~~~~~~~~~~~~~~~~(0,1) \rightarrow \left\{
    \begin{array}{ll}
        (0,0):~ 1\\
        (1,1):~  \lambda p n_1(x,\e)\\ 
        (-1,1):~   \lambda(1-p)n_1(x,\e)
        \end{array}
            \right.,\\\\
        &(0,0) \rightarrow  \left\{
    \begin{array}{ll}
        (1,1):~   \lambda p n_1(x,\e)\\ 
        (0,1):~   \lambda p  n_1(x,\zeta) -   \lambda p n_1(x,\e)\\ 
        (-1,0):~   \lambda(1-p)n_1(x,\e)
        \end{array}
        \right.,~~~
        (-1,1) \rightarrow  \left\{
    \begin{array}{ll}
        (0,1):~ 1\\ 
        (-1,0):~   1 
        \end{array}
\right.,\\\\
&(-1,0) \rightarrow  \left\{
    \begin{array}{ll}
        (-1,1):~    \lambda p  n_1(x,\zeta)\\ 
        (0,0):~   1 
        \end{array}
\right.,
    \end{split}
\end{equation}
where we recall that $n_1(x,\e)$, resp. $n_1(x,\zeta)$, is the number of neighbours of $x$ in state $1$ in $\e$, resp. $\zeta$. Since $\e\leq \zeta$, the rates are positive and the transition rates are well defined. 
\end{proof}

\begin{proof}[Proof of Theorem \ref{CouplingISwithSpont}]
    Take $\lambda>0$ and $p\in [0,1]$. Consider two configurations $(\e_0,\xi_0)\in \Omega^2$ such that $\xi_0\leq \eta_0$. As in the previous proof, sample independent families of Poisson point processes $(N^1_{x,y})_{(x,y)\in E(\Z^{d})}$, $(N^2_{x,y})_{(x,y)\in E(\Z^{2})}$, $(U_x)_{x\in \Z^d}$, $(V_x)_{x\in \Z^d}$ with respective parameters $\lambda p$, $\lambda(1-p)$ and $1$. Deduce the evolution  of an $IS(\lambda,p)$ process $(\e_t)_{t\geq 0}$ starting from $\e_0$, and that of a $Spont(\lambda,p)$ process $(\xi_t)_{t\geq 0}$ starting from $\xi_0$, by using their graphical representation and using the same Poisson point processes for that. 

    When a site $x\in \Z^d$ flips from $0$ to $1$ in $(\xi_t)_{t\geq 0}$, so does this happen for $(\e_t)_{t\geq 0}$, if $x$ was in state $0$, as this happens under the effect of the same arrows $\overset{1}{\longrightarrow}$ produced by $N^1_{y,x}$. Furthermore, when a site $x\in \Z^d$ flips from $0$ to $-1$ in $(\zeta_t)_{t\geq 0}$, so does this happen for $(\xi_t)_{t\geq 0}$, if $x$ was in state $0$. Indeed, for such a flip to happen at $s>0$, there must be a $y\sim x$ such that an arrow $\overset{-1}{\longrightarrow}$ is produced by $N^2_{y,x}$ for the graphical construction of $IS(\lambda,p)$, but this arrow is activated for $(\xi_t)_{t\geq 0}$ at $s$, whatever the state of $y$. Finally, as we use the same processes $(U_x)_{x\in \Z^d}$, resp. $(V_x)_{x\in \Z^d}$, for the flippings of $1$ to $0$, resp. $-1$ to $0$ in $\e$ and $\xi$, such flips happen simultaneously. Therefore, flips from $0$ to $1$ in $(\xi_t)_{t\geq 0}$, from $1$ to $0$ in $(\zeta_t)_{t\geq 0}$, from $0$ to $-1$ in $(\e_t)_{t\geq 0}$ and from $-1$ to $0$ in $(\xi_t)_{t\geq 0}$, can never disrupt the order, so the basic coupling is order preserving.\medskip

    In terms of transition rates, the basic coupling between $IS(\lambda,p)$ and a $Spont(\lambda,p)$ process goes as follows. At site $x\in \Z^d$, for a current configuration $(\e,\zeta)$ such that $\e\leq \zeta$:
        \begin{equation}\label{CouplingSpontandIS}
    \begin{split}
        &(0,0) \rightarrow \left\{
    \begin{array}{ll}
        (1,1):~ \lambda p n_1(x,\xi)\\ 
        (0,1):~ \lambda p\big[n_1(x,\e)-n_1(x,\xi) \big]\\
        (-1,-1):~ \lambda(1-p)n_1(x,\e)\\
        (-1,0):~\lambda(1-p)\big[2d-n_1(x,\e) \big]
        \end{array}
        \right.,~~~~(0,1) \rightarrow \left\{
    \begin{array}{ll}
        (0,0):~ 1\\
        (-1,1):~ 2d\lambda(1-p)\\
        (1,1):~ \lambda p n_1(x,\xi)
        \end{array}
        \right.,\\\\
        &(-1,1) \rightarrow \left\{
    \begin{array}{ll}
        (-1,0):~ 1\\
        (0,1):~ 1
        \end{array}
        \right.~~,~~~~~~~~~~~~~~~~~~~~~~~~~~~~~(-1,-1) \rightarrow (0,0): 1,\\\\
        &(-1,0) \rightarrow \left\{
    \begin{array}{ll}
        (0,0):~ 1\\
        (-1,1):~ \lambda p n_1(x,\e)\\
        (-1,-1):~ \lambda(1-p)n_1(x,\e)
        \end{array}
        \right.,~\mathrm{and}~~~~~~~~~~~~ (1,1) \rightarrow (0,0):1.
    \end{split}
\end{equation}
Since $\xi_0\leq \e_0$, the rates are positive and the dynamics is well defined. One can check that $(\xi_t)_{t\geq 0}$ is a $Spont(\lambda,p)$ process on $\Z^d$ and $(\eta_t)_{t\geq 0}$ is an $IS(\lambda,p)$ process with parameter $\lambda p$ and that almost surely, for any $t\geq 0$, $\xi_t\leq \eta_t$.

\end{proof}

\subsection{Monotonicity for $Spont(\lambda,p)$, lack of monotonicity for $IS(\lambda,p)$}\label{loa}
As stated in Proposition \ref{MonotonicityofSpont}, contrary to $IS$ processes (see Proposition \ref{Nomonot} and its proof in this subsection), we have monotonicity in $p$ at fixed $\lambda$ for $Spont$ processes.
\begin{proof}[Proof of Proposition \ref{MonotonicityofSpont}] Again, the basic coupling provides an order preserving coupling. For that, consider Poisson point processes indexed by oriented edges and sites $N^1_{x,y}$, $\Tilde{N}^1_{x,y}$, $N^2_{x,y}$, $\Tilde{N}^2_{x,y}$, $U_x$, $V_x$, with respective parameters $\lambda p_1$, $\lambda (p_2-p_1)$,  $2d\lambda(1-p_2)$, $2d\lambda(p_2-p_1)$, $1$ and $1$. Then, build the graphical representation for $Spont(\lambda,p_1)$ by using $N^1_{x,y}$, $N^2_{x,y}+\Tilde{N}^2_{x,y}$, $U_x$ and $V_x$, and the graphical representation for $Spont(\lambda,p_2)$ by using $N^1_{x,y} + \Tilde{N}^2_{x,y}$, $N^2_{x,y}$, $U_x$ and $V_x$. As in the proofs in Section \ref{COUPLINGS}, one can check that this coupled graphical representation is order preserving.\medskip

The rates of this coupling are given as follows. At site $x\in \Z^d$ for a current couple of configuration $(\xi, \Tilde{\xi})$ such that $\xi \leq \Tilde \xi$:
\begin{equation}\label{CouplingSpontSpont}
    \begin{split}
        &(0,0) \rightarrow \left\{
    \begin{array}{ll}
        (1,1):~ \lambda p_1 n_1(x,\xi)\\ 
        (0,1):~ \lambda p_2n_1(x,\Tilde{\xi}) - \lambda p_1n_1(x,\xi) \\
        (-1,-1):~ 2d\lambda(1-p_2)\\
        (-1,0):~2d\lambda(p_2-p_1)
        \end{array}
        \right.,~~~~(0,1) \rightarrow \left\{
    \begin{array}{ll}
        (0,0):~ 1\\
        (-1,1):~ 2d\lambda(1-p_1)\\
        (1,1):~ \lambda p_1 n_1(x,\xi)
        \end{array}
        \right.,\\\\
        &(-1,1) \rightarrow \left\{
    \begin{array}{ll}
        (-1,0):~ 1\\
        (0,1):~ 1.
        \end{array}
        \right.~~,~~~~~~~~~~~~~~~~~~~~~~~~~~~~~(-1,-1) \rightarrow (0,0): 1\\\\
        &(-1,0) \rightarrow \left\{
    \begin{array}{ll}
        (0,0):~ 1\\
        (-1,1):~ \lambda p_2 n_1(x,\Tilde{\xi})\\
        (-1,-1):~ 2d\lambda(1-p_1)
        \end{array}
        \right.,~\mathrm{and}~~~~~~~~~~~~ (1,1) \rightarrow (0,0):1.
    \end{split}
\end{equation}

\end{proof}
Now let us prove Proposition \ref{Nomonot}, which claims that for any ordering of $\{-1,0,1\}$, there is no monotonicity in $p$ for an $IS$ process at fixed $\lambda$. First note that the basic coupling, built as in the proof of Proposition \ref{MonotonicityofSpont}, does not provide an order preserving coupling, whatever the order on $\{-1,0,1\}$. In fact, hereafter we only consider orders where $1$ is the maximal element of the set, as we are interested in the survival of $1$'s.
\begin{itemize}
    \item For the order $-1<0<1$: consider $\e^1,\e^2\in \Omega^2$ with $\e^1\leq \e^2$, such that there is $x\sim y\in \Z^d$ with $\e^1(x)=\e^1(y)=0$, $\e^2(x)=1$ and $\e^2(y)=0$. If an arrow $\overset{-1}{\longrightarrow}$ is produced by $N^2_{x,y}$, a birth of a $-1$ happens at $y$ for $\e^2$ which breaks the ordering of $\e^1$ and $\e^2$.
    \item For the order $-1<0<1$ and the partial order $0,-1<1$ : consider $\e^1,\e^2\in \Omega^2$ with $\e^1\leq \e^2$, such that there is $x\sim y\in \Z^d$ with $\e^1(x)=1$, $\e^1(y)=0$, $\e^2(x)=1$ and $\e^2(y)=-1$. If an arrow $\overset{1}{\longrightarrow}$ is produced by $N^1_{x,y}$, a birth of a $1$ happens at $y$ for $\e^1$ which breaks the ordering of $\e^1$ and $\e^2$.
\end{itemize}
This is not enough to conclude with the absence of monotonicity in $p$ as other couplings could be order preserving. It turns out that in \cite{Borrello}, a characterization of the monotinicity of a process is given in terms of conditions on its transition rates, see \cite[Theorem 2.4]{Borrello} that we recall in Appendix \ref{ApppendixBorrello}. We use this very convenient characterization here.

\begin{proof}[Proof of Proposition \ref{Nomonot}]
    Again, we discuss according to the ordering.
    \begin{itemize}
        \item For the order $-1<0<1$:
        Using the notation in \cite{Borrello} (see \ref{ApppendixBorrello} for more details), the birth and death rates are given by:
        \begin{equation}\label{BR1}
            R_{1,0}^{0,1}=\lambda p,~ ~ R_{0,1}^{-1,0} = \lambda (1-p),~~P_1^{-1}=P_1^1=1.
        \end{equation}
    Using the same notation as in the statement of Theorem 2.4 in \cite{Borrello}, taking $(\alpha,\beta)=(0,0)$, $(0,1)=(\gamma,\delta)$ and $h_1=0$, we have
$$ \sum_{k\in X, k>\gamma -\alpha} R_{\gamma,\delta}^{-k,0} = \lambda(1-p) > \sum_{k\in X, k>j_{1}} R_{\alpha,\beta}^{-k,0}=0,$$
so inequality (2.14) in the characterization of monotonicity in Theorem 2.4 of \cite{Borrello} is not satisfied.

        \item For the order $0<-1<1$: the birth and death rates are given by:
        \begin{equation}\label{BR2}
            R_{1,0}^{0,2}=\lambda p,~ ~ R_{1,0}^{0,1} = \lambda (1-p),~~P_1^{-2}=P_{-1}^{-1}=1.
        \end{equation}
        Now, taking $(\alpha,\beta)=(1,0)$, $(\gamma,\delta) = (1,-1)$ and $h_1=0$, we have
        $$ \sum_{k\in X, k>\delta-\beta} R_{\alpha,\beta}^{0,k} = \lambda p >\sum_{k\in X, k>0} R_{\gamma,\delta}^{0,k}=0,$$
        so inequality (2.13) in the characterization of monotonicity in Theorem 2.4 of \cite{Borrello} is not satisfied.

        \item For the partial order $0,-1<1$ one can take the birth or death rates to be as in \eqref{BR1} or \eqref{BR2}. In either cases, inequalities (2.13) or (2.14) in \cite{Borrello} are not satisfied and one does not have monotonicity.
         \end{itemize}
\end{proof}
\begin{rem}
    Using \cite[Theorem 2.4]{Borrello}, one can also show that there is no monotonicity in $\lambda$, at fixed $p$, for an $IS$ process as well as a $Spont$ process on $\Z^d$.
\end{rem}

\section{Phase transition for the $Spont$ process}\label{PMR}
\subsection{Proof of Theorem \ref{Phaseforspont}}
From the monotonicity of $Spont$, stated in Proposition \ref{MonotonicityofSpont}, the following holds
\begin{cor}\label{increaseofproba}
    Suppose that $\lambda>0$ is fixed and consider $\xi\in \Omega$. The mapping
    $$p \mapsto \Tilde{\mathbb{P}}_{\xi}^{\lambda,p}\Big(\forall t\geq 0,~ A(\xi_t)\neq \emptyset \Big) $$
    is a non decreasing function, where we recall that $A(\xi_t)$, defined in \eqref{setoffertile}, is the set of sites in state $1$ in $\xi_t$.
\end{cor}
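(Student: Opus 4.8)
The plan is to deduce Corollary \ref{increaseofproba} directly from the monotonicity of the $Spont$ process established in Proposition \ref{MonotonicityofSpont}, via a standard coupling argument. Fix $\lambda>0$ and $\xi\in\Omega$, and take $p_1\le p_2$ in $[0,1]$. By Proposition \ref{MonotonicityofSpont}, there is a coupling $(\xi_t^{(1)},\xi_t^{(2)})_{t\ge 0}$ on $\Omega^2$ in which $(\xi_t^{(1)})_{t\ge 0}$ is a $Spont(\lambda,p_1)$ process, $(\xi_t^{(2)})_{t\ge 0}$ is a $Spont(\lambda,p_2)$ process, and, almost surely, $\xi_0^{(1)}\le\xi_0^{(2)}$ implies $\xi_t^{(1)}\le\xi_t^{(2)}$ for all $t\ge 0$, for the order $-1<0<1$. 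Choosing $\xi_0^{(1)}=\xi_0^{(2)}=\xi$ (which trivially satisfies $\xi_0^{(1)}\le\xi_0^{(2)}$), we obtain $\xi_t^{(1)}\le\xi_t^{(2)}$ for all $t\ge 0$ almost surely.

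The second step is to translate the pointwise domination $\xi_t^{(1)}\le\xi_t^{(2)}$ into an inclusion of the sets of $1$'s. For the order $-1<0<1$, a site $x$ with $\xi_t^{(1)}(x)=1$ is in the maximal state, so $\xi_t^{(1)}(x)\le\xi_t^{(2)}(x)$ forces $\xi_t^{(2)}(x)=1$ as well. Hence $A(\xi_t^{(1)})\subseteq A(\xi_t^{(2)})$ for all $t\ge 0$, almost surely under the coupling. Consequently, on the event $\{\forall t\ge 0,~A(\xi_t^{(1)})\neq\emptyset\}$ we also have $\{\forall t\ge 0,~A(\xi_t^{(2)})\neq\emptyset\}$, i.e., the former event is (almost surely) contained in the latter.

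Taking probabilities and using that the marginals of the coupling are the laws $\Tilde{\mathbb{P}}_{\xi}^{\lambda,p_1}$ and $\Tilde{\mathbb{P}}_{\xi}^{\lambda,p_2}$ respectively, this inclusion yields
\begin{equation*}
    \Tilde{\mathbb{P}}_{\xi}^{\lambda,p_1}\Big(\forall t\ge 0,~A(\xi_t)\neq\emptyset\Big)\le\Tilde{\mathbb{P}}_{\xi}^{\lambda,p_2}\Big(\forall t\ge 0,~A(\xi_t)\neq\emptyset\Big),
\end{equation*}
which is exactly the asserted monotonicity of $p\mapsto\Tilde{\mathbb{P}}_{\xi}^{\lambda,p}(\forall t\ge 0,~A(\xi_t)\neq\emptyset)$. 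There is no real obstacle here; the only point requiring a word of care is the passage from the coordinatewise order on configurations to the set inclusion $A(\xi_t^{(1)})\subseteq A(\xi_t^{(2)})$, which is precisely why the order must be chosen with $1$ as the maximal element, and one should also note that the survival event is measurable (it is a countable intersection over rational times of the events $\{A(\xi_t)\neq\emptyset\}$, using right-continuity of trajectories, exactly as in the definition of survival given in Section \ref{DandR}).
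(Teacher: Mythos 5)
Your argument is correct and is essentially identical to the paper's proof: both invoke the order-preserving coupling of Proposition \ref{MonotonicityofSpont} with the two processes started from the same configuration $\xi$, deduce $A(\xi_t^{(1)})\subseteq A(\xi_t^{(2)})$ from the pointwise domination in the order $-1<0<1$, and conclude by comparing the survival events. Your extra remarks on why $1$ must be the maximal element and on the measurability of the survival event are fine but not needed beyond what the paper already records.
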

\begin{proof}
    Let $p_1<p_2$ and consider an order preserving coupling of $(\xi_t,\Tilde{\xi}_t)_{t\geq 0}$ on $\Omega^2$ initially in $(\xi,\xi)$ such that $(\xi_t)_{t\geq 0}$ is a $Spont(\lambda,p_1)$ process and $(\Tilde{\xi}_t )_{t\geq 0}$ a $Spont(\lambda,p_2)$ process. Then for any $t\geq 0$, $A(\xi_t)\subset A(\Tilde{\xi}_t)$, hence the result.
\end{proof}

\begin{prop}\label{highenoughp}
    Fix $\lambda>\lambda_c(d)$. For $p<1$ large enough, the process $Spont(\lambda,p)$ survives.
\end{prop}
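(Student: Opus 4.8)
The plan is to prove survival of $Spont(\lambda,p)$ for $\lambda>\lambda_c(d)$ fixed and $p$ close to $1$ by a block-renormalization argument comparing the graphical representation to a supercritical oriented percolation, following the strategy of \cite{Coex} and \cite{CPRS} but carried out directly in dimension $d$. The idea is that when $p=1$ the $Spont$ process is exactly a supercritical contact process with parameter $\lambda>\lambda_c(d)$, which is known to satisfy a block (restart) estimate: there exist a scale $L$, a time $T$ and a probability close to $1$ that, started from sufficiently many $1$'s in a box of side $L$, the process has many $1$'s in each of the $2d$ neighbouring translated boxes at time $T$, and this event depends only on the graphical structure inside a bounded space-time region. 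By a standard comparison theorem (Liggett, \cite[Chapter 6]{IPS}, or \cite{SaintFlour}), if this local event has probability exceeding the percolation threshold $1-\delta_0$ of the relevant oriented percolation with $M$-dependence, the process survives with positive probability.

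The key steps, in order, are as follows. First, I would fix the target: choose $L$, $T$ and an integer $k$ so that, for the contact process with parameter $\lambda$ ($=Spont(\lambda,1)$), the event $G$ that ``starting from at least $k$ ones in $[-L,L]^d$, there are at least $k$ ones in each box $2Le_j + [-L,L]^d$, $j=\pm 1,\dots,\pm d$, at time $T$, using only arrows, crosses and dots inside $[-3L,3L]^d\times[0,T]$'' has probability $\geq 1-\delta_0/2$. This uses only supercriticality of the contact process and its known restart/block estimates, valid in all dimensions; a duality argument (the contact process is self-dual) is the clean way to get ``enough ones remain'' without the one-dimensional arguments used in \cite{Coex,CPRS}. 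Second, I would transfer this estimate from $p=1$ to $p$ close to $1$: on a fixed bounded space-time window $[-3L,3L]^d\times[0,T]$, the graphical representation of $Spont(\lambda,p)$ differs from that of $Spont(\lambda,1)$ only through the $\overset{-1}{\longrightarrow}$ arrows, whose total intensity $2d\lambda(1-p)|[-3L,3L]^d|\,T$ tends to $0$ as $p\to1$; hence with probability $\to 1$ there is \emph{no} blocking event in the window, and on that event $Spont(\lambda,p)$ coincides with the contact process there. So for $p$ close enough to $1$ the event $G$ (now read in the $Spont$ graphical representation) still has probability $\geq 1-\delta_0$. Third, I would invoke monotonicity of $Spont$ in $p$ (Proposition \ref{MonotonicityofSpont}, via Corollary \ref{increaseofproba}) only to know the survival probability is monotone, and the comparison theorem to conclude: the collection of ``good blocks'' stochastically dominates a supercritical $M$-dependent oriented percolation, so with positive probability an infinite cluster of good blocks exists, forcing $A(\xi_t)\neq\emptyset$ for all $t$; thus $Spont(\lambda,p)$ survives.

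The main obstacle I expect is Step one done cleanly in arbitrary dimension: producing the block event $G$ with probability arbitrarily close to $1$ requires knowing that a supercritical contact process, started from enough ones in a block, not only reaches the neighbouring blocks but leaves ``enough ones'' there (not just one), all within a bounded space-time region, and the authors emphasize that the one-dimensional route of \cite{Coex,CPRS} does not extend. The resolution is to use self-duality of the contact process: the probability that a given target site in a neighbouring block is occupied at time $T$ equals the probability that a dual process started from that site hits the initial block of ones; running $k$ (near-)independent dual processes and using that a supercritical dual process survives and spreads linearly, one gets that the expected number of occupied target sites is $\geq (1-\varepsilon)k$ with high probability, and concentration (or a second-moment/FKG argument) upgrades this to ``at least $k$ ones with probability close to $1$''. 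Once $G$ is secured for the contact process, the passage to $Spont(\lambda,p)$ and the percolation comparison are routine; the delicate bookkeeping is choosing $L,T,k$ and the percolation parameter $M$ consistently, which I would not grind through here.

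Combining Proposition \ref{highenoughp} with Corollary \ref{increaseofproba} and part (i) of Theorem \ref{Compcontact} (which gives extinction for $p\leq\lambda_c(d)/\lambda$) then yields Theorem \ref{Phaseforspont}: the set of $p$ for which $Spont(\lambda,p)$ survives is a non-empty (by Proposition \ref{highenoughp}) up-set contained in $(\lambda_c(d)/\lambda,1]$, so its infimum $p_c^{spont}(\lambda)\in[\lambda_c(d)/\lambda,1)$ has the stated properties.
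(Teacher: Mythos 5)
Your overall strategy---a block construction carried out directly in dimension $d$, comparison with an $M$-dependent oriented percolation via Theorem \ref{ComparisonT}, duality to guarantee enough $1$'s, and sending $p\to 1$ to suppress the blocking arrows in a fixed bounded space-time window---is the same as the paper's. But there is a genuine gap at the heart of your Step two: the claim that, on the event that no $\overset{-1}{\longrightarrow}$ arrow falls in the window, $Spont(\lambda,p)$ coincides with a contact process there. This is only true if the configuration at the \emph{start} of the block has no $-1$'s inside the window. For the first block this can be arranged, but the comparison theorem requires the block property to propagate: the configuration seen by the next block will in general contain $-1$'s inside its window, because $-1$'s appear spontaneously (at rate $2d\lambda(1-p)>0$) on every empty site outside the region your good event controls, and the next window is not contained in the previous one. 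Your block property (``at least $k$ ones in $[-L,L]^d$'') says nothing about $-1$'s, so hypothesis (i) of Theorem \ref{ComparisonT} is not verified, and pre-existing $-1$'s can block the spread even when no new blocking arrow occurs. The paper resolves exactly this point: the set $H$ additionally requires that there be no $-1$ in $I=[-2N,2N]^d$, the time is split as $T=T_1+T_2$, and the events $E_1^{\xi}$ (no new $-1$ in $R$), $E_2^{\xi}$ (the $-1$'s initially present in $R\setminus I$ die before $T_1$) and $E_3^{\xi}$ (by the linear-growth bound of Proposition \ref{ALL}, the $1$'s do not reach the boundary of $[-2N,2N]^d$ before $T_1$) are combined so that the $1$'s never meet a surviving $-1$; the geometry ($I$ much smaller than the window $(-8N,8N)^d$, translations by $\pm 2Ne_1$) makes the ``no $-1$'' part of the target property measurable with respect to the window. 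Any correct proof along your lines must contain such a mechanism.

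Two smaller points. First, the graphical representations of $Spont(\lambda,p)$ and $Spont(\lambda,1)$ do not differ only through the blocking arrows: the birth arrows have rate $\lambda p$ versus $\lambda$. This is harmless (couple by thinning on a bounded window, or work with parameter $\lambda p>\lambda_c(d)$ throughout, as the paper does), but it has to be addressed, since your block constants $L,T,k$ are calibrated at $p=1$. Second, your duality step overstates what it yields: the probability that a fixed target site is occupied at time $T$ is at most roughly the density of the upper invariant measure, which is bounded away from $1$, so the expected number of occupied sites among $k$ chosen targets cannot be $(1-\varepsilon)k$; moreover ``at least $k$ ones'' at the start of a block does not by itself give survival probability close to $1$ uniformly over the placement of those ones without an additional input (a correlation inequality for $\overline{\nu}$ of downward-FKG type, or a restriction on how the ones are arranged). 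The paper sidesteps both difficulties by defining $H$ directly through a survival probability (existence of a translate $\mathcal{C}$ of $[-K,K]^d$ inside $[-N,N]^d$ with $\mathbf{P}^{\lambda p}_{\xi^{|\mathcal{C}}}\big(|\zeta_t|\neq 0,\ \forall t\geq 0\big)>1-\gamma/2$), propagating it through the coupling of $\zeta^{|\mathcal{C}}$ with $\zeta^{\Z^d}$ (Proposition \ref{Schinaz}, event $E_4^{\xi}$), and verifying via the self-duality relation \eqref{Dualrelationcontact} and the upper invariant measure that $\tau_{2Ne_1}\zeta_T^{\Z^d}\in H$ with probability at least $1-\gamma/4$. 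Your sketch can be repaired along these lines, but as written the propagation of the block event---both the $-1$ issue and the ``enough well-placed ones'' issue---is not established.
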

The proof of Proposition \ref{highenoughp} is the object of Section \ref{Proofhighenough}.
From Corollary \ref{increaseofproba} and Proposition \ref{highenoughp}, we deduce the proof of Theorem \ref{Phaseforspont}.

\begin{proof}[Proof of Theorem \ref{Phaseforspont}]
    Consider $\lambda>\lambda_c(\Z^d)$ and introduce 
    \begin{equation*}
        p_c^{spont}(\lambda):=\inf\{~ p\in [0,1),~ ~Spont(\lambda,p)~ \text{survives}~ \}.
    \end{equation*}
    By Proposition \ref{highenoughp}, $p_c^{spont}(\lambda)<1$. By Corollary \ref{increaseofproba}, for any $p>p_c^{spont}(\lambda)$ a $Spont(\lambda,p)$ process survives and, for any $p<p_c^{spont}(\lambda)$ a $Spont(\lambda,p)$ becomes extinct. Furthermore, building an order preserving coupling between $Spont(\lambda,p)$ and a contact process with birth parameter $\lambda p$ in the same spirit as the coupling \eqref{CouplingIScontact} between an $IS(\lambda,p)$ and a contact process, we get that $p_c^{spont}(\lambda)\geq \lambda_c(d)/\lambda$.
\end{proof}

\subsection{Proof of Proposition \ref{highenoughp}}\label{Proofhighenough}



In order to prove Proposition \ref{highenoughp}, that is, that for $\lambda>\lambda_c(d)$ and for $p \in [\lambda_c(d)/\lambda,1)$, large enough, the process $Spont(\lambda,p)$ survives, we use a comparison with oriented percolation Theorem. For that, we rely on the graphical construction of our processes (see Section \ref{graphrepr}). The idea underlying the Comparison Theorem (see Theorem \ref{ComparisonT}), is to show that for $p$ large enough, the process dominates an oriented percolation configuration containing, almost surely, an infinite component. 

In what follows, we recall the definition of oriented percolation and state the Comparison Theorem. We also recall some results on the contact process. Then, we apply the Comparison Theorem to the $Spont$ process.

\subsubsection{Comparison Theorem }\label{SectioncompT}
Let us recall the definition of oriented site percolation in two dimensions. We refer to \cite{SaintFlour} and references therein for the proofs of the results on oriented site pecolation stated below.

The underlying graph for oriented site percolation with parameter $p\in [0,1]$ is the graph with vertices the bi-dimensional even lattice
\begin{equation}\label{Evengrid}
    \mathcal{L} = \Big\{(m,n) \in \Z^2,~ m+n~\text{is even },~ n\geq 0 \Big\}
\end{equation}
and with edges the oriented bonds
$$(m,n) \rightarrow (m+1,n+1),~\text{and } (m,n) \rightarrow (m-1,n+1).$$
An oriented site percolation graph is obtained by keeping each site $(m,n)\in \mathcal{L}$ with probability $p$ and discarding it with probability $1-p$ (there might be some dependencies in the samplings of sites but we will discuss this further). We say that the site is open if it has been kept after sampling and closed otherwise. \medskip

We say that there is an oriented open path from $(x,n)$ to $(y,m)$ and denote this by $(x,n)\rightarrow (y,m)$ if there exists a sequence of points $x=x_1,...,x_k=m$ such that $(x_i,n+i) \in \mathcal{L}$, $|x_i-x_{i+1}| = 1$ for $1\leq i \leq k-1$ and the sites $(x_i,n+i)$ are all open.

Given an initial set of open sites $\mathcal{A}_0 \subset 2\Z$ we denote by $\mathcal{A}_n$ the following set of sites:
$$ \mathcal{A}_n = \{y,~ (x,0) \rightarrow (y,n)~ \text{for some } x\in \mathcal{A}_0\},$$
that is, the set of attainable sites at time $n$, starting from those in $\mathcal{A}_0$.\medskip

Let $\mathcal{A}_n^0$ be the set of reachable sites at time $n$ when $\mathcal{A}_0=\{0\}$ and define
$\mathcal{C}_0 = \underset{n\geq 0}{\cup} \mathcal{A}_n $, that is, the set of points reached by the origin through a connected open oriented path. We say that percolation occurs when $|\mathcal{C}_0| = \infty$.
\begin{thm}{Percolation for independent samplings.}\label{Percoindep} 

 Suppose that the samplings of sites are performed independently from one another. Then, for $p\in [0,1)$ large enough,
$$\mathbb{P}[~ |\mathcal{C}_0| = \infty] >0. $$
\end{thm}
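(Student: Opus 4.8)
The plan is to prove this classical fact — survival of supercritical oriented site percolation in two dimensions under independent samplings — by the standard contour/Peierls argument applied to the dual of the cluster of the origin, so that the result is reduced to a geometric counting estimate on self-avoiding paths.

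First I would set up the right notion of a \emph{blocking contour}. Observe that if $|\mathcal{C}_0|<\infty$, then the cluster of the origin in the percolation graph on $\mathcal{L}$ is finite, and one shows (by a standard topological argument on the even lattice, see \cite{SaintFlour}) that this is equivalent to the existence of a path of closed sites surrounding the origin in the ``dual'' sense: more precisely, in the oriented setting the correct statement is that either percolation reaches level $n$ for every $n$, or the set of points reachable from the origin dies out at some finite level, and the latter forces the existence of a long connected set of closed sites ``cutting off'' the origin. Concretely, define $r_n = \max\{ x : (x,n)\in\mathcal{A}_n^0\}$ and $\ell_n = \min\{ x : (x,n)\in \mathcal{A}_n^0\}$ on the event that $\mathcal{A}_n^0\neq\emptyset$, with the convention that the process dies if $\mathcal{A}_n^0=\emptyset$; extinction of the cluster then means $\mathcal{A}_n^0=\emptyset$ for some $n$, and just before extinction one can exhibit a connected string of closed sites of length comparable to the extinction time that separates $\{0\}$ from $\infty$.

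The second step is the union bound. For each $k\geq 1$, I would bound the probability that there exists such a separating path of closed sites of combinatorial length $k$ passing near the origin. The number of such paths of length $k$ is at most $C\cdot k \cdot \mu^k$ for some lattice connective constant $\mu$ (here $\mu\leq 3$ since each step on the relevant graph has at most three continuations, or $\mu$ can be taken to be the self-avoiding-walk connective constant of the appropriate lattice), and each fixed path of length $k$ consists of $k$ distinct sites that are all closed, which under independent sampling has probability $(1-p)^k$. Hence the probability of extinction of the origin's cluster is at most
\begin{equation*}
    \sum_{k\geq k_0} C k \,\mu^k (1-p)^k,
\end{equation*}
and for $p$ close enough to $1$ so that $\mu(1-p)<1$, this series is summable and can be made strictly less than $1$ (indeed arbitrarily small) by choosing $k_0$, equivalently by restricting attention to the origin and using that the shortest separating contour has length at least some fixed constant, or simply by taking $p$ even closer to $1$. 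Therefore $\mathbb{P}[|\mathcal{C}_0|=\infty] = 1 - \mathbb{P}[\text{extinction}] > 0$ for $p\in[0,1)$ large enough.

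The main obstacle is making the topological/combinatorial dictionary between ``the cluster of the origin dies out'' and ``there is a separating path of closed sites'' fully rigorous on the oriented even lattice $\mathcal{L}$ — the oriented structure makes this slightly more delicate than the unoriented Peierls argument, and the cleanest route is to track the rightmost and leftmost occupied points $(r_n,\ell_n)$ and argue that an excursion of $\mathcal{A}_n^0$ to extinction is encircled by closed sites on its boundary. Since this is entirely standard, I would simply invoke \cite{SaintFlour}, where this contour argument for oriented site percolation is carried out, and present the union-bound computation above as the quantitative heart of the proof; alternatively one can cite the comparison with oriented bond percolation and the known positivity of the survival probability there. Either way, no new idea beyond the classical Peierls estimate is needed.
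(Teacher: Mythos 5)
Your proposal is correct and follows essentially the same route as the paper, which gives no detailed proof but simply invokes the classical Peierls/contour argument and refers to \cite{Grimmett} and \cite{SaintFlour} — exactly the argument you sketch and the reference you defer to for the topological dictionary. The only small inaccuracy is that in the oriented-site-percolation contour argument only a fixed positive fraction of the sites along a separating contour of length $k$ are forced to be closed (not all $k$ of them), but this changes only the constant in the exponent of the union bound and not the conclusion.
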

The proof of this can be obtained thanks to a Peierls argument (or dual contour argument) and we refer to \cite{Grimmett}, or \cite{SaintFlour}. In \cite{SaintFlour}, it is detailed how Theorem \ref{Percoindep} can be extended to the case where samplings are not necessarily independent but with finite range dependencies. 
\begin{Def}
    Fix $M>0$ an integer. We say that the samplings of sites are $M$-dependent with intensity at least $1-\gamma$ (with $\gamma \in [0,1]$) if, whenever $(m_i,n_i)_{1\leq i \leq k}$ is a finite sequence such that $\|(m_i,n_i) - (m_j,n_j)\|_{\infty}>M$ for $i\neq j$, then
    \begin{equation}\label{M}
        \mathbb{P}\big[\underset{1\leq i \leq k}{\cup}(n_i,m_i) \text{ is open } \big]\geq 1-\gamma^k.
    \end{equation}
\end{Def}
\begin{thm}{Percolation for $M$-dependent samplings.}\ \label{Criticvalueperco}

Consider an $M$-dependent percolation process with intensity at least $1-\gamma$. If $\gamma \leq 6^{-4(2M+1)}$, then
$$ \mathbb{P}[~ |\mathcal{C}_0| = \infty] >0.  $$

\end{thm}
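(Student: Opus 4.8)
The plan is to run the classical Peierls (dual contour) argument that yields Theorem~\ref{Percoindep}, changing only the one step where independence of the samplings is invoked. First I would recall the contour description of a finite cluster: if $|\mathcal{C}_0|<\infty$ then the wet region started from the origin is empty from some level on, and one can exhibit a self-avoiding ``blocking contour'' $\Gamma$ made of \emph{closed} sites which separates the origin from the sites it fails to reach. In the oriented setting $\Gamma$ is not a loop around the origin: it follows the space-time boundary of the dead wet region, hence it advances essentially level by level and is, for counting purposes, a one-dimensional object. Since the event $\{|\mathcal{C}_0|=\infty\}$ fails only if some blocking contour exists, it is enough to prove that, when $\gamma\le 6^{-4(2M+1)}$, the probability that a blocking contour exists is strictly less than $1$; I would obtain this from a union bound over contours grouped by their number of sites.

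Two estimates feed the union bound. The first is purely combinatorial, exactly as in the independent case: the number of admissible blocking contours having exactly $n$ sites is at most $C_1\,n\,6^{\,n}$, the base $6$ coming from a crude bound on the number of ways to grow a contour one dual site at a time. The second is, for a fixed $\Gamma$ with $|\Gamma|=n$, the probability that all of its sites are closed. For independent samplings this equals $\gamma^{\,n}$. With only $M$-dependence I would instead extract from $\Gamma$ a sub-collection $S(\Gamma)$ of sites that are pairwise at $\ell_\infty$-distance strictly greater than $M$: using that $\Gamma$ progresses level by level, one can pick its sites at levels spaced out by more than $M$ so that any two chosen sites already differ by more than $M$ in the level coordinate, which gives $|S(\Gamma)|\ge n/(4(2M+1))$. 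Hypothesis~\eqref{M} applied to $S(\Gamma)$ then yields
\[
\mathbb{P}\big[\text{all sites of }\Gamma\text{ are closed}\big]\ \le\ \mathbb{P}\big[\text{all sites of }S(\Gamma)\text{ are closed}\big]\ \le\ \gamma^{\,|S(\Gamma)|}\ \le\ \gamma^{\,n/(4(2M+1))}.
\]

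Combining the two estimates,
\[
\mathbb{P}\big[\,|\mathcal{C}_0|<\infty\,\big]\ \le\ \sum_{n\ge n_0} C_1\,n\,6^{\,n}\,\gamma^{\,n/(4(2M+1))}\ =\ \sum_{n\ge n_0} C_1\,n\,\Big(6\,\gamma^{1/(4(2M+1))}\Big)^{n},
\]
and the hypothesis $\gamma\le 6^{-4(2M+1)}$ is precisely what makes the geometric ratio $6\,\gamma^{1/(4(2M+1))}$ at most $1$. Exploiting the slack left in the constants (so that this ratio is in fact $<1$) together with the minimal contour length $n_0$, the right-hand side is finite and strictly less than $1$, so $\mathbb{P}[\,|\mathcal{C}_0|=\infty\,]>0$.

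I expect the main obstacle to be geometric rather than probabilistic: setting up the blocking-contour description for \emph{oriented} site percolation, and in particular verifying that a contour of $n$ sites contains $\ge n/(4(2M+1))$ pairwise far-apart sites, is the delicate point, and it is exactly these geometric inputs (together with the crude count giving the base $6$) that produce the stated threshold $6^{-4(2M+1)}$. The probabilistic content reduces to the single application of \eqref{M}; the rest is the standard contour count, and I would follow the treatment of finite-range dependence in \cite{SaintFlour}.
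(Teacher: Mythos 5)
The paper does not actually prove Theorem \ref{Criticvalueperco}: it refers to \cite{SaintFlour}, whose proof is the contour (Peierls) argument you outline, so your overall plan — contour count plus a single application of \eqref{M} to a well-separated subfamily of contour sites — is the right one. The genuine gap is in the step that produces that subfamily. You claim that, because the blocking contour ``advances essentially level by level'', you can pick sites whose levels are pairwise more than $M$ apart and get $|S(\Gamma)|\geq n/(4(2M+1))$. The blocking object for a finite oriented cluster does not have this property: it is the boundary of the dead region and can spend arbitrarily long stretches on a single level. For example, if the cluster spreads to width of order $n$ and is then capped by an essentially horizontal row of closed sites, all but $O(1)$ of the forced closed sites lie on one or two levels, so your level-spaced selection returns only $O(1)$ sites and the bound $\gamma^{|S(\Gamma)|}$ is of constant order — far too weak to beat the exponentially many contours of length $n$ in the union bound. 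Worse, a contour can fold so that its $n$ sites are packed in a box of side of order $\sqrt{n}$, in which case no selection rule whatsoever can produce more than roughly $n/(2M+1)^{2}$ sites at pairwise $\ell_{\infty}$-distance greater than $M$.

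The standard repair is the greedy extraction from $n$ distinct sites of $\mathbb{Z}^{2}$, which guarantees only $|S(\Gamma)|\geq n/(2M+1)^{2}$; running your union bound with this gives a hypothesis of the form $\gamma\leq c^{-(2M+1)^{2}}$ (quadratic in $2M+1$ in the exponent — this is the form the argument in \cite{SaintFlour} delivers), not the linear exponent $4(2M+1)$ you aim for. To obtain a linear exponent you would have to prove that the forced closed sites of an arbitrary blocking contour are spread out along one coordinate, which is precisely the unjustified step. There is also a secondary bookkeeping problem: you attribute the base $6$ to the contour count and then note that the hypothesis only makes the ratio $6\,\gamma^{1/(4(2M+1))}$ at most $1$; if the entropy really were $C_1 n\,6^{n}$ there would be no slack and the sum would not be shown to be $<1$. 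In the standard argument the number of contours of length $n$ grows like $3^{n}$ (self-avoiding dual paths with no immediate reversal), and the $6$ in the threshold is chosen exactly so that the geometric ratio is at most $1/2$; you need to redo this step with the correct entropy constant rather than rely on unspecified slack.
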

Again, we refer to \cite{SaintFlour} for a detailed proof of Theorem \ref{Criticvalueperco}.\medskip

The Comparison Theorem gives general conditions which guarantee that an interacting particle system dominates an oriented site percolation. This domination relation allows to infer survival of the process if there is an infinite path starting from the origin in the oriented percolation. We refer to the seminal paper \cite{Bramson} where this technique is used for spin systems.\medskip

Consider $(\xi_t)_{t\geq 0}$ a translation invariant and finite range process with state space $X^{\Z}$, which can be constructed from a graphical representation. Fix some positive integers $N,T,k$ and $j$. For $(m,n)\in \mathcal{L}$, define the space-time regions
\begin{equation}\label{kandj}
   R_{m,n} = (2mN e_1,nT) + [-kN,kN]^{d}\times [0,jT], 
\end{equation}
where $e_1$ denotes the first vector in the canonical basis of $\Z^d$. Let $M= \max(k,j)$ so that the regions $R_{m,n}$ and $R_{m',n'}$ are disjoint as soon as $\|(m,n)- (m',n')\|_{\infty}>M$. Let $H$ be the set of configurations satisfying a certain property which only depends on the state of sites in $[-N,N]^{d}$. For $\xi\in H$, we say that $(m,n)\in \mathcal{L}$ is wet if
\begin{equation}\label{wetness}
    \tau_{-2mN e_1}\xi_{nT}\in H ~~\Longrightarrow~~ \tau_{-2(m-1)N e_1}\xi_{(n+1)T}\in H~\text{ and }~\tau_{-2(m+1)N e_1 }\xi_{(n+1)T}\in H,
\end{equation}
where $\tau_x$ stands for the translation by $x$ and where $(\xi_t)_{t\geq 0}$, is the process starting from $\xi$. Introduce the set $X_n^{\xi}$, defined by
\begin{equation}
    X_n^{\xi} = \{m,~ (m,n)\in \mathcal{L}~\text{is wet}\}.
\end{equation}

The idea underlying the comparison theorem is to overlap the graphical representation of the process with the graph $\mathcal{L}$, and to prove that the set of wet sites in $\mathcal{L}$ stochastically dominates the set of open sites of an $M$-dependent percolation configuration on $\mathcal{L}$. For that, note that the boxes $R_{m,n}$ and $R_{m',n'}$ are disjoint when $\|(m,n)- (m',n')\|_{\infty}>M$, and that the probability for a site in $(m,n)$ to be wet can be made large enough, by using a so called good event (see its definition in Theorem \ref{ComparisonT}). This allows us to conclude that there are infinitely often wet sites, and the population survives infinitely often.\medskip
 
\begin{thm}{\cite[Section 4]{SaintFlour}}\ \label{ComparisonT} Suppose that the following conditions are satisfied:
\begin{itemize}
    \item [(i)] For any $\xi\in H$, there is a good event $G^{\xi}$ which is measurable with respect to the graphical representation in $R_{0,0} = [-kN,kN]^d\times [0,jT]$, such that if $\xi_0=\xi$ then, on $G^{\xi}$, $\xi_T\in \tau_{-2Ne_1} H\cap \tau_{2Ne_1}H$.
    \item [(ii)] There is a $\gamma\in [0,1]$ such that for any $\xi\in H$, $\mathbb{P}(G^{\xi})\geq 1-\gamma$.
\end{itemize}
Then, for any $\xi\in H$,
$X_n^{\xi}$ dominates a two dimensional $M$-dependent oriented site percolation (recall that $M=\max(k,j)$) with initial configuration $\mathcal{A}_0=A(\xi)$ and density at least $1-\gamma$, that is,$$\forall n\geq 0, ~ \mathcal{A}_n \subset X_n~~a.s. $$
\end{thm}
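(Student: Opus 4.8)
\textbf{Proof proposal for Theorem \ref{ComparisonT}.}

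The plan is to follow the standard block-construction argument (as in \cite{Bramson} and \cite[Section 4]{SaintFlour}), reducing the claim to the $M$-dependent percolation comparison of Theorem \ref{Criticvalueperco}. First I would set up the induction: the goal is to show by induction on $n$ that for every $m$ with $(m,n)\in\mathcal{L}$, if $(m,n)\in X_n^\xi$ (equivalently, if the appropriate translate of $\xi_{nT}$ lies in $H$ on the good event cascade), then the corresponding wetness propagates. More precisely, I would define the random field $\omega_{m,n}\in\{0,1\}$ by declaring $\omega_{m,n}=1$ (the site is ``open'') if the translated good event $\tau_{-2mNe_1}G^{\cdot}$ occurs inside the box $R_{m,n}$, i.e.\ if the graphical representation restricted to $R_{m,n}=(2mNe_1,nT)+R_{0,0}$ realizes a translate of the good event. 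By hypothesis (ii), each $\omega_{m,n}=1$ with probability at least $1-\gamma$.

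The first key step is the deterministic implication: on $\{\omega_{m,n}=1\}$, condition (i) (suitably translated and shifted in time by $nT$, using translation invariance of the process and of the graphical construction) guarantees that $\tau_{-2mNe_1}\xi_{nT}\in H$ forces $\tau_{-2(m-1)Ne_1}\xi_{(n+1)T}\in H$ and $\tau_{-2(m+1)Ne_1}\xi_{(n+1)T}\in H$; that is, $(m,n)$ is wet in the sense of \eqref{wetness} whenever $\omega_{m,n}=1$. Hence $\{\omega_{m,n}=1\}\subset\{(m,n)\text{ is wet}\}$, so it suffices to prove the domination statement for the field $(\omega_{m,n})$ in place of the wetness field. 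The second key step is to check that $(\omega_{m,n})$ is an $M$-dependent family with the right density. Since $\omega_{m,n}$ is measurable with respect to the graphical representation inside $R_{m,n}$, and since $R_{m,n}\cap R_{m',n'}=\emptyset$ whenever $\|(m,n)-(m',n')\|_\infty>M$ (this is exactly the reason $M=\max(k,j)$ is chosen, as noted after \eqref{kandj}), any collection $(\omega_{m_i,n_i})$ indexed by an $M$-separated set of points is a collection of functions of independent pieces of the Poisson graphical data, hence independent. In particular $\mathbb{P}[\bigcap_i\{\omega_{m_i,n_i}=1\}]=\prod_i\mathbb{P}[\omega_{m_i,n_i}=1]\geq(1-\gamma)^k$, which is the $M$-dependence-with-intensity condition \eqref{M}. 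The final step is to run the inductive comparison: starting from $\mathcal{A}_0=A(\xi)$, one checks by induction on $n$ that $\mathcal{A}_n\subset X_n$ a.s., because at each step the percolation dynamics only ever ``asks'' to move from a wet/open site $(m,n)$ to $(m\pm1,n+1)$, and the deterministic implication above shows those target sites are indeed wet; thus $X_n^\xi$ contains the attainable set of the $M$-dependent percolation built from $(\omega_{m,n})$, which has density at least $1-\gamma$.

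The main obstacle, and the point that requires care rather than being purely routine, is the bookkeeping around translation and time-shift invariance when passing from the single-step statement (i) — phrased only at time $0$ inside $R_{0,0}$ — to the statement at a general space-time block $(m,n)$. One must invoke that the graphical representation is built from translation-invariant Poisson processes so that the law of the graphical data inside $R_{m,n}$ is the pushforward under $\tau_{2mNe_1}$ and a time shift by $nT$ of the law inside $R_{0,0}$, and that the process $(\xi_t)$ itself is a deterministic functional of this data together with the initial condition restricted to the relevant spatial window (this is where the hypothesis that $H$ depends only on the states of sites in $[-N,N]^d$, together with finite range and the fact that information propagates at finite speed in a fixed time window, is used to localize the construction inside the $[-kN,kN]^d$ spatial slab). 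Once this localization-plus-invariance is made precise, the good event $G^\xi$ transplanted to $R_{m,n}$ has the same probability $\geq 1-\gamma$ and the same deterministic consequence, and the argument closes. I would also remark that a subtle but standard point is that $\omega_{m,n}=1$ does \emph{not} require the hypothesis $\tau_{-2mNe_1}\xi_{nT}\in H$ to hold — the good event is defined on the graphical data alone, independently of whether the block is actually reached — which is precisely what makes the $\omega$-field genuinely $M$-dependent and decouples it from the percolation dynamics it dominates.
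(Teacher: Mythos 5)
Your outline follows the right general strategy (the standard block comparison of \cite[Section 4]{SaintFlour}, which is exactly what the paper invokes), but there is a genuine gap at the decisive step, namely the claimed $M$-dependence of your field $(\omega_{m,n})$. Hypothesis (i) supplies a \emph{family} of events $G^{\xi}$ indexed by $\xi\in H$, each measurable with respect to the graphical representation in the box; it does not supply a single event valid for all base configurations at once. To decide whether your $\omega_{m,n}$ equals $1$ you must first know which good event to test, namely $G^{\eta}$ with $\eta=\tau_{-2mNe_1}\xi_{nT}$, and this configuration is a functional of the initial condition and of \emph{all} the Poisson data in $\Z^d\times[0,nT]$, not of the data in $R_{m,n}$ alone. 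Consequently your assertion that $M$-separated $\omega$'s are ``functions of independent pieces of the Poisson graphical data, hence independent'' is false as written, and your closing remark --- that the good event is defined on the graphical data alone, independently of whether the block is reached --- does not repair this: it is true for each \emph{fixed} $\xi$, while the index you actually need is random and measurable with respect to the past. Replacing $G^{\eta}$ by $\bigcap_{\xi\in H}G^{\xi}$ would restore measurability inside the box but destroys the bound $\geq 1-\gamma$, since $H$ is an infinite set of configurations.

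The standard repair, which is how the cited proof proceeds, is to argue with conditional probabilities rather than outright independence. Condition on the $\sigma$-field generated by the initial configuration and the graphical data up to time $nT$ (together with the states of same-level sites at $\|\cdot\|_\infty$-distance greater than $M$): on the event $\tau_{-2mNe_1}\xi_{nT}\in H$, the realized configuration $\eta$ is measurable with respect to this $\sigma$-field, and since the translated event $G^{\eta}$ involves only the Poisson points in $R_{m,n}$ after time $nT$ --- which are independent of the past and, for $M$-separated sites, live in disjoint regions --- the conditional probability that $(m,n)$ is wet is at least $1-\gamma$ uniformly; sites whose block is not reached are wet by default because \eqref{wetness} is an implication. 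One then needs the domination statement for fields whose \emph{conditional} openness probability given lower levels and distant same-level sites is at least $1-\gamma$ (this is the sense in which the $M$-dependent percolation of Theorem \ref{Criticvalueperco} must be understood), and your final induction $\mathcal{A}_n\subset X_n$ closes as you describe. With this conditioning step supplied, the rest of your sketch (translation and time-shift invariance of the graphical construction, localization of $H$ to $[-N,N]^d$, the deterministic implication from (i)) is correct and matches the argument the paper refers to.
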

\begin{cor}
    From Theorems \ref{ComparisonT} and \ref{Criticvalueperco}, one can deduce that if $\gamma$ can be made arbitrarily small, the process $(\xi_t)_{t\geq 0}$ starting from a configuration $\xi\in H$ survives with strictly positive probability. In particular, if there is a strictly positive probability of connecting the configuration $\{0\}$ to a configuration $\xi\in H$ in a finite space time box, one deduces that the process $(\xi_t)_{t\geq 0}$ starting from  $\{0\}$ survives with strictly positive probability.
\end{cor}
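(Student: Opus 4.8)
The plan is to feed the domination provided by Theorem \ref{ComparisonT} into the percolation criterion of Theorem \ref{Criticvalueperco}, and then to transfer survival of the oriented percolation back to survival of the process. First I would note that the box parameters $N,T,k,j$, and hence $M=\max(k,j)$, are fixed \emph{before} the density is optimized, so that the threshold $6^{-4(2M+1)}$ appearing in Theorem \ref{Criticvalueperco} is a fixed strictly positive constant. The hypothesis that $\gamma$ can be made arbitrarily small then lets me choose the remaining free parameter so that $\gamma\leq 6^{-4(2M+1)}$. For such a $\gamma$, Theorem \ref{ComparisonT} gives, for every $\xi\in H$, a coupling in which $X_n^\xi$ dominates an $M$-dependent oriented site percolation of density at least $1-\gamma$ started from $\mathcal{A}_0=A(\xi)$, while Theorem \ref{Criticvalueperco} guarantees that this percolation contains an infinite open cluster from the origin with probability at least $c_0:=\mathbb{P}[\,|\mathcal{C}_0|=\infty\,]>0$. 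Since the bound $\mathbb{P}(G^\xi)\geq 1-\gamma$ in hypothesis (ii) is uniform over $\xi\in H$, the constant $c_0$ does not depend on $\xi$; and as $\xi\in H$ forces a site in state $1$ in the central block $[-N,N]^d$, the origin lies in $\mathcal{A}_0$, so percolation from $\mathcal{A}_0$ dominates percolation from the single site at the origin.

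Next I would convert percolation survival into process survival. On the event $\{|\mathcal{C}_0|=\infty\}$ the reachable set $\mathcal{A}_n$ is nonempty for every $n\geq 0$, since once empty it stays empty and an infinite cluster forces nonemptiness at every level. By the domination $\mathcal{A}_n\subset X_n^\xi$, there is a reachable wet site at each level $n$; by the way $H$-membership propagates along wet paths (the defining implication of wetness together with the good event $G^\xi$ of Theorem \ref{ComparisonT}), a reachable site $m\in\mathcal{A}_n$ certifies that $\tau_{-2mNe_1}\xi_{nT}\in H$, whence the central block of this recentering contains a $1$ and therefore $A(\xi_{nT})\neq\emptyset$. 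Finally I would use that for the process at hand a site can only flip to state $1$ when it has a neighbour already in state $1$, so that the configuration with no $1$'s is absorbing; combined with $A(\xi_{nT})\neq\emptyset$ for all $n$, this forces $A(\xi_t)\neq\emptyset$ for \emph{all} $t\geq 0$. Hence $\mathbb{P}_\xi(\text{survive})\geq c_0>0$ for every $\xi\in H$, which is the first assertion, with a lower bound uniform in $\xi\in H$.

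For the second (``in particular'') assertion I would run a restart argument based on the Markov property. Denote by $A$ the event, of strictly positive probability by hypothesis, that starting from $\{0\}$ the process reaches at some finite deterministic time $s$ a configuration $\xi_s$ whose appropriate recentering lies in $H$; this event is measurable with respect to the graphical representation in a finite space-time box $[0,s]\times B$. Conditioning on $\mathcal{F}_s$ and applying the Markov property at time $s$, the conditional probability of survival equals $\mathbb{P}_{\xi_s}(\text{survive})$, which on $A$ is at least $c_0$ by the uniform bound from the first part (using translation invariance to recenter, and monotonicity from Proposition \ref{MonotonicityofSpont} to discard any extra $1$'s sitting outside the central block, which can only help). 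Therefore
\begin{equation*}
\mathbb{P}_{\{0\}}(\text{survive})\;\geq\;\mathbb{E}_{\{0\}}\big[\mathbf{1}_A\,\mathbb{P}_{\xi_s}(\text{survive})\big]\;\geq\;c_0\,\mathbb{P}_{\{0\}}(A)\;>\;0,
\end{equation*}
which proves survival from $\{0\}$.

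The content here is essentially bookkeeping, so the only genuinely delicate points — and where I would be most careful — are two. First, the passage from the discrete-time statement ``$A(\xi_{nT})\neq\emptyset$ for all $n$'' to the continuous-time survival statement, which relies specifically on the absorption property of the configuration empty of $1$'s and would fail for a dynamics in which isolated $1$'s could spontaneously appear. Second, obtaining the lower bound $c_0$ \emph{uniformly} over $\xi\in H$, which is exactly what makes the restart argument legitimate; this uniformity is inherited directly from the uniform density bound in hypothesis (ii) of Theorem \ref{ComparisonT} together with the fact that oriented percolation from a larger initial set dominates percolation from the single site at the origin.
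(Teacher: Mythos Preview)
Your argument is correct and follows the standard route that the paper implicitly relies on: the paper states this Corollary without proof, treating it as an immediate consequence of Theorems~\ref{ComparisonT} and~\ref{Criticvalueperco}, so your detailed write-up is in fact \emph{more} than what the paper provides. Two minor remarks are worth making. First, your claim that ``$\xi\in H$ forces a site in state $1$ in the central block'' and your appeal to Proposition~\ref{MonotonicityofSpont} both invoke features specific to the later application (Section~\ref{PROOF}) rather than the abstract setting of Section~\ref{SectioncompT}; this is harmless in context, since the Corollary is only ever applied to the $Spont$ process, but strictly speaking the abstract $H$ is just ``a set of configurations depending only on $[-N,N]^d$''. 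Second, the paper's own notation $\mathcal{A}_0=A(\xi)$ in Theorem~\ref{ComparisonT} is somewhat loose (the left side lives in $2\Z$, the right in $\Z^d$); what is really meant, and what you correctly use, is that $0\in\mathcal{A}_0$ because $\xi_0=\xi\in H$ certifies the $H$-property in the central block.
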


To apply the comparison Theorem, as will be done in Section \ref{PROOF}, one needs to properly choose $H$ and fix $k,j,N,T$. Then, one is left to check that the comparison assumption holds.

\subsubsection{Preliminary results on the contact process}

Given a subset $\mathcal{A}$ of $\Z^d$, let $(\zeta_t^{A})_{t\geq 0}$ be a contact process starting with $1$'s in each site of $\mathcal{A}$ and $0$'s everywhere else. For a configuration $\zeta\in \{0,1\}^{\Z^d}$, denote by $|\zeta|\in [0,\infty]$ the number of ones (possibly infinite) in $\zeta$. Also, denote by 
\begin{equation*}
    \tau^{\mathcal{A}}(x) := \inf\big\{t\geq 0,~ \zeta_t^{\mathcal{A}}(x) = 1\big\},~~\text{and}~~
   \mathcal{H}_t^{\mathcal{A}}:= \big\{x\in \Z^d,~ \tau^{A}(x)\leq t\big\},
\end{equation*}
the set of sites which have been occupied before time $t$. The following result, for which a proof can be found in \cite{CPseveraldimensions}, tells us that the set $\mathcal{H}_t^{\mathcal{A}}$ is at most linearly growing.
\begin{prop} \label{ALL}
    Fix $\lambda>\lambda_c(d)$. There exists $\alpha_1>0$, such that for any finite set $\mathcal{A}\subset \Z^d$, there are $C_{\mathcal{A}},\ell_{\mathcal{A}}>0$ such that for any contact process with parameter $\lambda$, 
    \begin{equation}
        \forall t\geq 0,~ \forall x\notin \mathcal{A} + [-\alpha_1 t, \alpha_1 t]^d,~ \mathbf{P}_{\zeta^{\mathcal{A}}}^{\lambda}\Big(x\in \mathcal{H}_t^{\mathcal{A}} \Big) \leq C_{\mathcal{A}}e^{-\ell_{\mathcal{A}}t}.
    \end{equation}
\end{prop}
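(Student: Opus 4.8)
The plan is to deduce this from the known large deviation / exponential decay estimates for the contact process in the supercritical regime. The quantity $\mathcal{H}_t^{\mathcal{A}}$ is the set of all sites that have \emph{ever} been occupied by the contact process up to time $t$ when started from the finite set $\mathcal{A}$. Since the contact process started from $\mathcal{A}$ is the union of the processes started from each single site $x_0\in\mathcal{A}$ (using the same graphical representation), a union bound over the finitely many elements of $\mathcal{A}$ reduces the statement to the case $\mathcal{A}=\{x_0\}$, and by translation invariance to $\mathcal{A}=\{0\}$. So it suffices to find $\alpha_1>0$ and constants $C,\ell>0$ (depending only on $\lambda$ and $d$) such that $\mathbf{P}\big(x\in\mathcal{H}_t^{\{0\}}\big)\leq Ce^{-\ell t}$ for all $x\notin[-\alpha_1 t,\alpha_1 t]^d$, with the constants $C_{\mathcal{A}},\ell_{\mathcal{A}}$ for general finite $\mathcal{A}$ then obtained by taking $C_{\mathcal{A}}=|\mathcal{A}|\,C$ and $\ell_{\mathcal{A}}=\ell$ (after enlarging the box by the diameter of $\mathcal{A}$, which can be absorbed into $C$ by adjusting $\ell$ slightly, or simply by replacing $[-\alpha_1 t,\alpha_1 t]^d$ by $\mathcal{A}+[-\alpha_1 t,\alpha_1 t]^d$ as in the statement).

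The key step is the single-site estimate. The event $\{x\in\mathcal{H}_t^{\{0\}}\}$ means there is an active path in the graphical representation from $(0,0)$ to $(x,s)$ for some $s\leq t$; equivalently, there is a space-time active path from $(0,0)$ travelling to spatial position $x$ within time $t$. This is exactly the kind of event controlled by the \emph{finite speed of propagation} estimate for the contact process: there is a constant $\alpha_1>0$ (any $\alpha_1$ larger than the asymptotic shape speed, in fact any $\alpha_1$ large enough works, and one does not even need supercriticality for the upper-speed bound) such that the probability that some active path emanating from the origin reaches spatial distance exceeding $\alpha_1 t$ by time $t$ decays exponentially in $t$. This is a standard fact proved by a subadditivity / exponential-moment argument on the Poisson graphical representation: the number of arrows used by such a path is controlled by a Poisson-type random variable, and a Chernoff bound gives the exponential decay. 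Concretely one writes $\mathbf{P}(x\in\mathcal{H}_t^{\{0\}})\le \mathbf{P}(\exists\ \text{active path from }(0,0)\text{ reaching distance }\ge\|x\|_\infty)$ and, since $\|x\|_\infty>\alpha_1 t$, bounds this by $Ce^{-\ell t}$ via the standard large-deviation estimate for the range of the graphical representation (see e.g. \cite[Section 2]{SaintFlour} or \cite{CPseveraldimensions}). I would cite the result from \cite{CPseveraldimensions} as indicated in the statement rather than reprove it.

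The main obstacle — really the only point requiring care — is making sure the cited estimate is stated for the quantity we need: we want a bound on sites \emph{ever} visited up to time $t$, not just occupied \emph{at} time $t$, and uniformly over the (time-)location of the visit. The "ever visited" version is automatic from the graphical-representation formulation because an active path to $(x,s)$ with $s\le t$ is in particular a path that has travelled spatial distance $\|x\|_\infty$ using only arrows with time-coordinates in $[0,t]$; the same Poisson large-deviation bound applies with $t$ as the time budget. One should also check the dependence of constants: $\alpha_1$ can be chosen depending only on $\lambda$ and $d$ (indeed it can be taken independent of $\lambda$ on bounded intervals), while $C$ and $\ell$ depend on $\lambda,d$; the only $\mathcal{A}$-dependence enters through the union bound ($|\mathcal{A}|$) and the translation of the box, which is exactly the form asserted. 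No further argument is needed.
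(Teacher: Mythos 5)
Your proposal is correct, and in fact the paper gives no internal proof of this proposition at all: it is quoted as a known result with a pointer to \cite{CPseveraldimensions}, which is also what you ultimately do. Your sketch of why the cited estimate yields exactly the stated form is sound: additivity of the graphical representation gives $\mathcal{H}_t^{\mathcal{A}}=\cup_{y\in\mathcal{A}}\mathcal{H}_t^{\{y\}}$, so a union bound plus translation invariance reduces to $\mathcal{A}=\{0\}$ with $C_{\mathcal{A}}=|\mathcal{A}|C$ and $\ell_{\mathcal{A}}=\ell$ (and note that $x\notin\mathcal{A}+[-\alpha_1 t,\alpha_1 t]^d$ already implies $x-y\notin[-\alpha_1 t,\alpha_1 t]^d$ for every $y\in\mathcal{A}$, so no enlargement of the box or adjustment of $\ell$ is needed); the single-site bound is the standard finite-speed-of-propagation estimate, valid without supercriticality, obtained from the path-counting/Chernoff bound on the graphical representation, and it does cover the ``ever visited'' event since an active path to $(x,s)$ with $s\le t$ only uses arrows with time coordinates in $[0,t]$. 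This gives $\alpha_1$ depending only on $\lambda$ and $d$, uniformly in $\mathcal{A}$, which is the uniformity the proposition asserts and the paper later uses (constants depending only on $K$ for translates of $[-K,K]^d$).
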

The following result tells us that unless a contact process $(\zeta_t^{A})_{t\geq 0}$ is extinct at time $t$, it is coupled to $(\zeta_t^{\Z^d})_{t\geq 0}$ inside a linearly growing set. This result, stated in one dimension in \cite{linear}, is generalized in \cite{Garet_Marchand} (see equation (44)) in any dimension thanks to the estimates in \cite{BG}. 

\begin{prop}{\cite{linear, Garet_Marchand}}\label{Schinaz}
Consider a contact process $(\zeta_t)_{t\geq 0}$ with parameter $\lambda>\lambda_c(d)$. There exists $\alpha_2>0$ such that for 
any finite set $\mathcal{A}\subset \Z^d$, there are $C'_\mathcal{A},\ell'_\mathcal{A}>0$ 
 such that at time $t>0$,
 \begin{equation*}
     \forall x\in \mathcal{A} + [-\alpha_2 t, \alpha_2 t]^d,~  \mathbf{P}_{\zeta^{\mathcal{A}}}^{\lambda}\Big(|\zeta_t^{\mathcal{A}}| \neq 0 \cap \zeta_t^{\mathcal{A}}(x) \neq \zeta_t^{\Z^d}(x)  \Big) \leq C'_\mathcal{A} e^{-\ell'_\mathcal{A} t}.
 \end{equation*}
 \end{prop}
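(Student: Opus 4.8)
The plan is to prove Proposition~\ref{Schinaz} by combining the basic coupling with the restart machinery of Bezuidenhout--Grimmett~\cite{BG}. First I would fix the basic coupling of $(\zeta_t^{\mathcal{A}})_{t\geq 0}$ and $(\zeta_t^{\Z^d})_{t\geq 0}$ on a common graphical representation, so that $\zeta_t^{\mathcal{A}}\leq \zeta_t^{\Z^d}$ for all $t$. Then a discrepancy $\zeta_t^{\mathcal{A}}(x)\neq \zeta_t^{\Z^d}(x)$ forces $\zeta_t^{\mathcal{A}}(x)=0$ and $\zeta_t^{\Z^d}(x)=1$, so on the survival event $\{|\zeta_t^{\mathcal{A}}|\neq 0\}$ the point $x$ is reached by an active path from $\Z^d\times\{0\}$ but by none from $\mathcal{A}\times\{0\}$. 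The strategy is to show that, conditionally on survival, the finite process quickly produces a fully occupied ``seed'' block, and that this seed fills a linearly growing space--time cone inside which the configuration no longer depends on the initial condition, hence agrees with $\zeta_t^{\Z^d}$.

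Concretely, I would proceed in three steps. \textbf{(1) A seed appears fast.} Using the finite range of the dynamics and the restart lemma of~\cite{BG} (enlarging a space--time block until a suitable good event has probability close to $1$), one produces a random time $\sigma$, measurable with respect to the graphical representation, and constants depending only on $\mathrm{diam}(\mathcal{A})$, such that $\mathbf{P}_{\zeta^{\mathcal{A}}}^{\lambda}\big(\sigma>s,\ |\zeta_s^{\mathcal{A}}|\neq 0\big)\leq C_{\mathcal{A}}e^{-\ell_{\mathcal{A}}s}$ and such that on $\{\sigma<\infty\}$ the configuration $\zeta_\sigma^{\mathcal{A}}$ contains a fully occupied box within distance $c\sigma$ of $\mathcal{A}$; this is the usual ``geometric number of independent restart attempts'' argument. \textbf{(2) The seed fills a cone.} Once such a seed is present at $(y,\sigma)$, the block construction of~\cite{BG} (in the form recalled in Section~\ref{SectioncompT}) shows that $(\zeta_t^{\mathcal{A}})_{t\geq\sigma}$ dominates a supercritical two-dimensional oriented site percolation of density $1-\gamma$ with $\gamma$ arbitrarily small, whose wet sites correspond to blocks in which the occupation is entirely determined by the Poisson clocks inside the block, and therefore coincides there with $\zeta_t^{\Z^d}$. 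Large deviation estimates for supercritical oriented percolation then give, for a fixed $x$ lying at linear depth inside the forward cone of the seed, that $x$ fails to belong to a wet block at level $\lfloor (t-\sigma)/T\rfloor$ with probability at most $C e^{-\ell(t-\sigma)}$. \textbf{(3) Choice of $\alpha_2$ and union bound.} Choosing $\alpha_2>0$ small enough that, on $\{\sigma\leq t/2\}$, the box $\mathcal{A}+[-\alpha_2 t,\alpha_2 t]^d$ is contained with a linear margin in the forward cone of the seed, I would union the bound from (1) on $\{\sigma>t/2\}$ with the filling-failure bound from (2)--(3), absorbing everything into constants $C'_{\mathcal{A}},\ell'_{\mathcal{A}}$.

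I expect the main obstacle to be making the conditioning on survival quantitative: the restart structure is needed precisely because the events ``the process is alive and has not yet produced a seed by time $s$'' are not independent across $s$, and the bookkeeping of how the constants degrade with $\mathrm{diam}(\mathcal{A})$ must be done with some care. In one dimension this can be sidestepped by the more hands-on edge-speed argument of~\cite{linear} (controlling the rightmost and leftmost occupied sites and coupling in the bulk), but that approach does not transfer to $d\geq 2$, which is exactly why the general statement goes through~\cite{Garet_Marchand} and the percolation estimates of~\cite{BG}. Since the result is classical and only used as a black box below, I would in fact simply cite~\cite{linear,Garet_Marchand} and indicate the above argument only briefly.
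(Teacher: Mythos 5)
The paper does not prove this proposition at all: it is quoted as a known result, with the one-dimensional case attributed to \cite{linear} and the general-dimensional statement to equation (44) of \cite{Garet_Marchand}, which in turn rests on the block/restart estimates of \cite{BG}. Your proposal ends in exactly the same place (cite \cite{linear,Garet_Marchand} and use the statement as a black box), and your sketch is a reasonable outline of the restart-plus-block-construction machinery behind those references, so it takes essentially the same approach as the paper.
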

\begin{prop}{\cite[Proposition 2.1, Chapter 2]{Liggett2} } \label{-n,n}
    Fix $\lambda>\lambda_c(d)$ and consider a contact process with parameter $\lambda$. Then,
\[
        \lim_{n \rightarrow \infty} \mathbf{P}_{\zeta^{[-n,n]^d}}^\lambda \left(\forall t \geq 0,~ |\zeta_t^{[-n,n]^d}| \neq 0 \right) = 1.
    \]
\end{prop}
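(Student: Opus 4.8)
The plan is to reduce the statement, via self-duality of the contact process, to the classical fact that for $\lambda>\lambda_c(d)$ the upper invariant measure charges no empty configuration.

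First I would record two elementary reductions. Set $p_n:=\mathbf{P}_{\zeta^{[-n,n]^d}}^{\lambda}\big(\forall t\ge 0,\ |\zeta_t^{[-n,n]^d}|\neq 0\big)$. Since the contact process is absorbed at the empty configuration, the event $\{\forall s\le t,\ |\zeta_s^{[-n,n]^d}|\neq 0\}$ equals $\{|\zeta_t^{[-n,n]^d}|\neq 0\}$ and is nonincreasing in $t$, so $p_n=\lim_{t\to\infty}\mathbf{P}(|\zeta_t^{[-n,n]^d}|\neq 0)$. Using the graphical representation of Section \ref{graphrepr} with common Poisson clocks, $\zeta_t^{[-n,n]^d}\le\zeta_t^{[-(n+1),(n+1)]^d}$ pointwise, hence $(p_n)_n$ is nondecreasing; write $p_\infty:=\lim_n p_n$. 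It remains to show $p_\infty=1$.

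The heart of the argument is a duality computation. I would invoke the self-duality of the contact process, $\mathbf{P}(\zeta_t^A\cap B\neq\emptyset)=\mathbf{P}(\zeta_t^B\cap A\neq\emptyset)$ for finite $A,B\subset\Z^d$, and let $B\uparrow\Z^d$, using monotonicity in the starting set on both sides, to get $\mathbf{P}(\zeta_t^A\neq\emptyset)=\mathbf{P}(\zeta_t^{\Z^d}\cap A\neq\emptyset)$. Taking $A=[-n,n]^d$ this reads $\mathbf{P}(|\zeta_t^{[-n,n]^d}|\neq 0)=\mathbf{P}(\exists x\in[-n,n]^d,\ \zeta_t^{\Z^d}(x)=1)$. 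Letting $t\to\infty$ and using that $\zeta_t^{\Z^d}$ converges weakly (by attractiveness, since it is started from the maximal configuration) to the upper invariant measure $\overline{\nu}$, while $\zeta\mapsto\mathbf{1}\{\exists x\in[-n,n]^d,\ \zeta(x)=1\}$ depends on finitely many coordinates and is therefore continuous, I obtain $p_n=\overline{\nu}(\exists x\in[-n,n]^d,\ \zeta(x)=1)$. Sending $n\to\infty$ along these increasing events gives $p_\infty=1-\overline{\nu}(\{\zeta,\ |\zeta|=0\})$.

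It remains only to check that $\overline{\nu}(\{\zeta,\ |\zeta|=0\})=0$ when $\lambda>\lambda_c(d)$; this is the classical description of the supercritical regime and I would simply invoke it (e.g. \cite[Part I]{Liggett2}), observing that the nontriviality $\overline{\nu}(\zeta(0)=1)=\mathbf{P}(\forall t,\ \zeta_t^{\{0\}}\neq\emptyset)>0$ itself follows, once more by self-duality, from $\lambda>\lambda_c(d)$. This last input about the contact process is the only step that is not a short self-contained argument; everything else is monotonicity in the initial condition together with self-duality and a routine interchange of limits.
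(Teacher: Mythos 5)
Your proof is correct and follows essentially the same route as the paper: self-duality to rewrite the survival probability from $[-n,n]^d$ as $\overline{\nu}$-mass of configurations intersecting $[-n,n]^d$, then $t\to\infty$ via convergence to the upper invariant measure and $n\to\infty$ using that $\overline{\nu}$ gives no mass to the empty configuration in the supercritical regime. The extra details you supply (absorption at the empty configuration, monotonicity in $n$, continuity of the cylinder event) are fine and only make explicit steps the paper leaves implicit.
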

We recall the proof of the above result as it requires a relation that will be used later on.
\begin{proof}
   Denote by $\overline{\nu}$ the upper invariant measure of the contact process on 
 $\Z^d$ with parameter $\lambda$. The graphical representation of the contact process provides the following self-dual relation, that can be found for instance in (1.7), \cite[Part I]{Liggett2}: for a configuration $\zeta$, for any $t\geq 0$,
\begin{equation}\label{Dualrelationcontact1}
\begin{split}
        \textbf{P}_{\zeta}^{\lambda}\Big(A(\zeta_t)\neq \emptyset \Big) &= \textbf{P}_{\zeta^{\Z^d}}^{\lambda}\Big(A(\zeta)\cap A(\zeta^{\Z^d}_t)\neq \emptyset  \Big).
\end{split}
\end{equation}
Taking $t$ to infinity on both sides of the above equality and since $A(\zeta_t)\neq \emptyset $ is a decreasing event, we get that
\begin{equation}\label{Dualrelationcontact0}
\begin{split}
        \textbf{P}_{\zeta}^{\lambda}\Big(A(\zeta_t)\neq \emptyset,~  \forall t\geq 0\Big) &= \underset{t\rightarrow \infty}{\lim}\textbf{P}_{\zeta^{\Z^d}}^{\lambda}\Big(A(\zeta)\cap A(\zeta^{\Z^d}_t)\neq \emptyset\Big).
\end{split}
\end{equation}
Using the convergence in law of $\zeta^{\Z^d}_t$ to $\overline{\nu}$, and the notation identifying a configuration in $\{0,1\}^{\Z^d}$, to its set $S$ of ones, we are left with
\begin{equation}\label{Dualrelationcontact}
\begin{split}
        \textbf{P}_{\zeta}^{\lambda}\Big(A(\zeta_t)\neq \emptyset,~  \forall t\geq 0\Big) = \overline{\nu}\Big(S,~\text{s.t}~ S\cap A(\zeta)\neq \emptyset \Big).
\end{split}
\end{equation}
Taking $A(\zeta)=[-n,n]^d$, and then $n \rightarrow \infty$ yields the result, as the upper invariant measure is non trivial. 
\end{proof}

\subsubsection{Proof of Proposition \ref{highenoughp} }\label{PROOF}
We are now in position to apply the comparison theorem \ref{ComparisonT}.
Fix $\lambda>\lambda_c(d)$ and $p\in [\lambda_c(d)/\lambda,1)$. Given an integer $N$, define the space time box
\begin{equation}\label{spacetimeboxes}
    R=(-8N,8N)^d\times [0,T],
\end{equation}
with
\begin{equation}\label{Timedivision}
    T = \frac{3N}{\alpha'}=\frac{N}{2\alpha_1} + \big(\frac{3N}{\alpha'}-\frac{N}{2\alpha_1}\big)=: T_1 + T_2.
\end{equation}
Above, $\alpha_1$, is given by Proposition \ref{ALL}, where we replace $\lambda$ by $\lambda p$, and $\alpha'=\min(6\alpha_1, \alpha_2)$, where $\alpha_2$ is given by Proposition \ref{Schinaz}, also replacing $\lambda$ by $\lambda p$. Consider the space box
\begin{equation}\label{Spaceintervals}
    I=[-2N,2N]^d,
\end{equation}
and keep in mind that $N$ and $T$ will be taken large.  With the notation introduced in Section \ref{SectioncompT}, this corresponds to having $k=8$ and $j=\lfloor\frac{3}{\alpha'}\rfloor +1$. Denote by $M=\max(k,j)$.

By Proposition \ref{-n,n}, we can choose $K\leq N$ (with $N$ large enough) such that if $(\zeta_t^{[-K,K]^d})_{t\geq 0}$ is a supercritical contact process with parameter $\lambda p$ starting from $[-K,K]^d$ filled with $1$'s and $0$'s everywhere else,
\begin{equation}\label{Keps}
    \mathbf{P}^{\lambda p}_{\zeta^{[-K,K]^d}}\Big(\forall t\geq 0,~ \big|\zeta_t^{[-K,K]^d}\big|\neq 0 \Big)> 1-\gamma/2,
\end{equation}
where $\gamma$ is a constant depending only on $M$ and given by Theorem \ref{Criticvalueperco}.\medskip

Given a subset $\mathcal{A}$ of $\Z^d$ and a configuration $\xi\in \Omega$, denote by $\xi^{|\mathcal{A}}$ the configuration defined by
\begin{equation}\label{defrestict}
    \xi^{|\mathcal{A}}(x)=\begin{cases}
        \xi(x),~ \text{if}~ x\in \mathcal{A}\\
        0~~\text{otherwise}.
    \end{cases}
\end{equation}
\begin{Def}
    Denote by $H$ the set of configurations $\xi\in \Omega$ such that there are no $-1$'s in $I$ and there is a translate $\mathcal{C}$ of $[-K,K]^d$ in $[-N,N]^d$, such that the probability that the contact process starting from $\xi^{|\mathcal{C}}$ survives, is greater than $1-\frac{\gamma}{2}$, i.e., 
    \begin{equation}
        \mathbf{P}^{\lambda p}_{\xi^{|\mathcal{C}}}\Big(|\zeta_t|\neq 0,~ \forall t\geq 0 \Big)>1-\frac{\gamma}{2}.
    \end{equation}
\end{Def}
By choice of $K$, $H$ is not empty.

\begin{lem}\label{Restriction}
    Given $(\xi_t)_{t\geq 0}$ a $Spont(\lambda,p)$ process, denote by $(\xi_t^R)_{t\geq 0}$ the restriction of $(\xi_t)_{t\geq 0}$ to the space time region $R$, that is, constructed from the graphical representation where only arrival times of the Poisson processes occurring within $R$ are taken into account. If
    \begin{equation*}
        \forall x\in R,~ \xi_0^R(x)\leq \xi_0(x),
    \end{equation*}
    then, a.s. for all $t>0$,
    \begin{equation*}
        \forall x\in R,~ \xi_t^R(x)\leq \xi_t(x).
    \end{equation*}
\end{lem}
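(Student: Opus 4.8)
The plan is to prove this by a monotonicity-in-the-graphical-representation argument, essentially a localized version of the basic coupling: the process $(\xi_t^R)$ uses a strict subset of the Poisson events used by $(\xi_t)$, so if it starts below $(\xi_t)$ it should stay below. The subtlety, and the reason this needs a proof rather than a one-line remark, is that $Spont(\lambda,p)$ is not attractive under the basic coupling in full generality — but it \emph{is} monotone, and more to the point, the only place where the $Spont$ dynamics can break the order $-1<0<1$ is via a $\overset{-1}{\longrightarrow}$ arrow creating a $-1$ on top of what is a $0$ in the larger configuration, or a $\overset{1}{\longrightarrow}$ arrow creating a $1$ where the larger process has a $-1$. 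Since $(\xi_t^R)$ uses \emph{fewer} arrows (only those inside $R$) and the same death marks $\times$, $o$ as $(\xi_t)$ (restricted to $R$), these order-breaking mechanisms cannot be triggered from below.

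Concretely, I would argue as follows. First, set up the coupling: both processes are read off the same realization of the Poisson families $(N_1^{x,y}), (N_2^{x,y}), (U_x), (V_x)$, with $(\xi_t^R)$ ignoring every event whose spatial location lies outside $R$ (equivalently, outside the spatial projection $(-8N,8N)^d$, and one also truncates at time $T$, but for the order-preservation statement only the spatial restriction matters). Then proceed by induction on the successive event times $0 = t_0 < t_1 < t_2 < \dots$ of the union of all relevant Poisson processes. Between consecutive event times nothing changes, so it suffices to check that a single event preserves $\xi_t^R \leq \xi_t$ on $R$. I would do a case analysis on the type of event at a site $x \in R$ (only sites in $R$ can change in $\xi^R$): (a) a $\times$ at $x$ kills a $1$ in both (if present) — order preserved since both go to $0$ or one was already $\leq$; (b) an $o$ at $x$ kills a $-1$ — same; (c) an arrow $\overset{1}{\longrightarrow}$ from $(y,t)$ to $(x,t)$ with $y,x\in R$: this flips $x$ to $1$ in $\xi^R$ only if $\xi_{t^-}^R(x)=0$ and $\xi_{t^-}^R(y)=1$, the latter forcing $\xi_{t^-}(y)=1$ by induction; if also $\xi_{t^-}(x)=0$ then $x$ flips to $1$ in $\xi$ too and the order is kept, while if $\xi_{t^-}(x)=1$ nothing to check, and the case $\xi_{t^-}(x)=-1$ cannot occur because $\xi_{t^-}^R(x)=0 \not\leq -1$ would already violate the inductive hypothesis — wait, that is exactly fine: $0 \leq -1$ is false, so the inductive hypothesis rules this out; (d) an arrow $\overset{-1}{\longrightarrow}$ into $x\in R$: in $\xi^R$ this flips $x$ to $-1$ iff $\xi_{t^-}^R(x)=0$, spontaneously, and \emph{the same arrow with spatial endpoint in $R$ also fires in $\xi$} (it is one of the Poisson events $(\xi_t)$ uses, and the $Spont$ rule makes it fire regardless of the state of $y$), so $\xi$ flips $x$ to $-1$ too if $\xi_{t^-}(x)=0$; if $\xi_{t^-}(x)\in\{1,-1\}$ then again either nothing changes in $\xi$ or $\xi_{t^-}^R(x)=0\leq \xi_{t^-}(x)$ already needs $\xi_{t^-}(x)\geq 0$, i.e. $\xi_{t^-}(x)=1$, and then $\xi^R(x)$ becomes $-1 \leq 1 = \xi(x)$, order preserved.

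The one genuinely important observation, which I would state explicitly, is point (d): because in the $Spont$ dynamics a $\overset{-1}{\longrightarrow}$ arrow acts spontaneously on its target, an arrow present in the restricted graphical representation is \emph{a fortiori} present and active in the full one, so a $-1$ born in $\xi^R$ is matched by a $-1$ (or a pre-existing $1$) in $\xi$. The complementary worry — a $1$ born in $\xi^R$ landing on a $-1$ of $\xi$ — is excluded because the inductive hypothesis $0 = \xi_{t^-}^R(x) \leq \xi_{t^-}(x)$ already forbids $\xi_{t^-}(x) = -1$. So no event can create a violation, and since the initial condition satisfies $\xi_0^R \leq \xi_0$ on $R$ by hypothesis, the inequality propagates to all $t>0$.

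I expect the main obstacle to be purely expository: making the bookkeeping of "which Poisson events are used by $\xi^R$" precise (in particular that $\xi^R$ uses exactly the events of $N_1^{x,y}, N_2^{x,y}$ with both endpoints in $R$'s spatial projection and $U_x, V_x$ with $x$ in that projection, and only up to time $T$), and being careful that the restricted process $(\xi_t^R)$ is itself a well-defined $Spont(\lambda,p)$-type dynamics on the finite region — which follows from the same argument as for the unrestricted graphical construction, cf. the reference to \cite[Section 2]{SaintFlour}. There is no real analytic difficulty; the content is entirely the case check above, and it is essentially the same computation as in the proof of Theorem \ref{CouplingISwithSpont}, specialized to comparing a process with itself on nested graphical representations.
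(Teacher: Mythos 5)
Your overall strategy -- read $\xi^R$ and $\xi$ off the same graphical representation and check, mark by mark, that no Poisson event can break the order -- is exactly the paper's argument, and your cases (a)--(c) are fine. The genuine problem is your bookkeeping of the $\overset{-1}{\longrightarrow}$ arrows, which is precisely the delicate point of the lemma. You fix the convention that $\xi^R$ uses only the events of $N^2_{x,y}$ with \emph{both} endpoints in the spatial projection of $R$. Under that convention the conclusion fails: take $x$ inside the spatial box adjacent to its boundary and $y\sim x$ outside; a $-1$ arrow from $y$ to $x$ fires in $\xi$ regardless of the state of $y$ (that is the $Spont$ rule) but is discarded by $\xi^R$, so if $\xi_{t^-}(x)=\xi^R_{t^-}(x)=0$ one gets $\xi_t(x)=-1<0=\xi^R_t(x)$, an event of positive probability. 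Your case (d) silently assumes the opposite convention -- that every $-1$ arrow whose \emph{head} lies in $R$ also fires in $\xi^R$ -- which contradicts your bookkeeping paragraph; moreover your case analysis only inspects events that fire in $\xi^R$, and never the events acting on $\xi$ alone, which is exactly where the danger sits (incoming $1$-arrows from outside $R$ are harmless, but the incoming $-1$-arrows are not, under your convention).

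The fix is the convention the paper actually uses: since the $0\to-1$ flips of $Spont$ occur at rate $2d\lambda(1-p)$ independently of the neighbours' states, they can be carried by site-indexed clocks, and the restricted process must include \emph{all} such $-1$ birth events at sites of $R$ (equivalently, all $N^2$ arrows whose head is in $R$); this is what the paper's proof means by ``a $-1$ clock rings on a site $x\in R$''. With that reading, every $-1$ creation inside $R$ is seen by both processes, your case (d) is correct as written, the induction closes, and your proof coincides with the paper's. Note that the allocation of these boundary $-1$ events is the whole content of Lemma \ref{Restriction}: it is exactly what breaks down for the $IS$ process, where $1$'s just outside $R$ trigger $-1$'s inside $R$, as the paper's remark following the proof emphasizes -- so this point deserves to be stated explicitly rather than left to the bookkeeping.
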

\begin{proof}
    By construction, deaths within $R$ produce the same effect for $\xi_t^R$ and $\xi_t$. If a $-1$ clock rings, at time $t$ on a site $x\in R$ such that $\xi_{t^-}^R=1$, that means that we necessarily had $\xi_{t^-}(x)=1$, so both these sites are already occupied and no $-1$ appears on $x$. A birth on a site $x\in R$ from some site $y$ only occurs for $\xi_{t}^R$ if $y\in R$ but then it would also occur for $\xi_{t}$.
\end{proof}
Now, introduce the good event as follows. For $\xi\in H$,
\begin{equation*}
    G^{\xi} = \Big(\text{if }\xi_0=\xi,~~\text{then}~~\xi_T^R\in \tau_{-2Ne_1}H \cap \tau_{2Ne_1}H \Big).
\end{equation*}
By definition, $(\xi_t^R)_{0\leq t\leq T}$ evolves only according to the space time events in $R$, so $G^{\xi}$ is measurable with respect to the graphical representation in $R$. By Lemma \ref{Restriction}, on the event $G^{\xi}$, we have that $\xi_T\in \tau_{-2Ne_1}H \cap \tau_{2Ne_1}H$. Therefore, the first assumption in the comparison theorem (see Theorem \ref{ComparisonT}) is satisfied. To check the second assumption, we show that for large enough $N$ and then large enough $p$,
\begin{equation}\label{lowerboundprobagoodevent}
    \mathbb{P}(G^{\xi}) \geq 1-\gamma.
\end{equation}
  \begin{figure}
        \begin{center}
        \centering
            \includegraphics[scale=0.32]{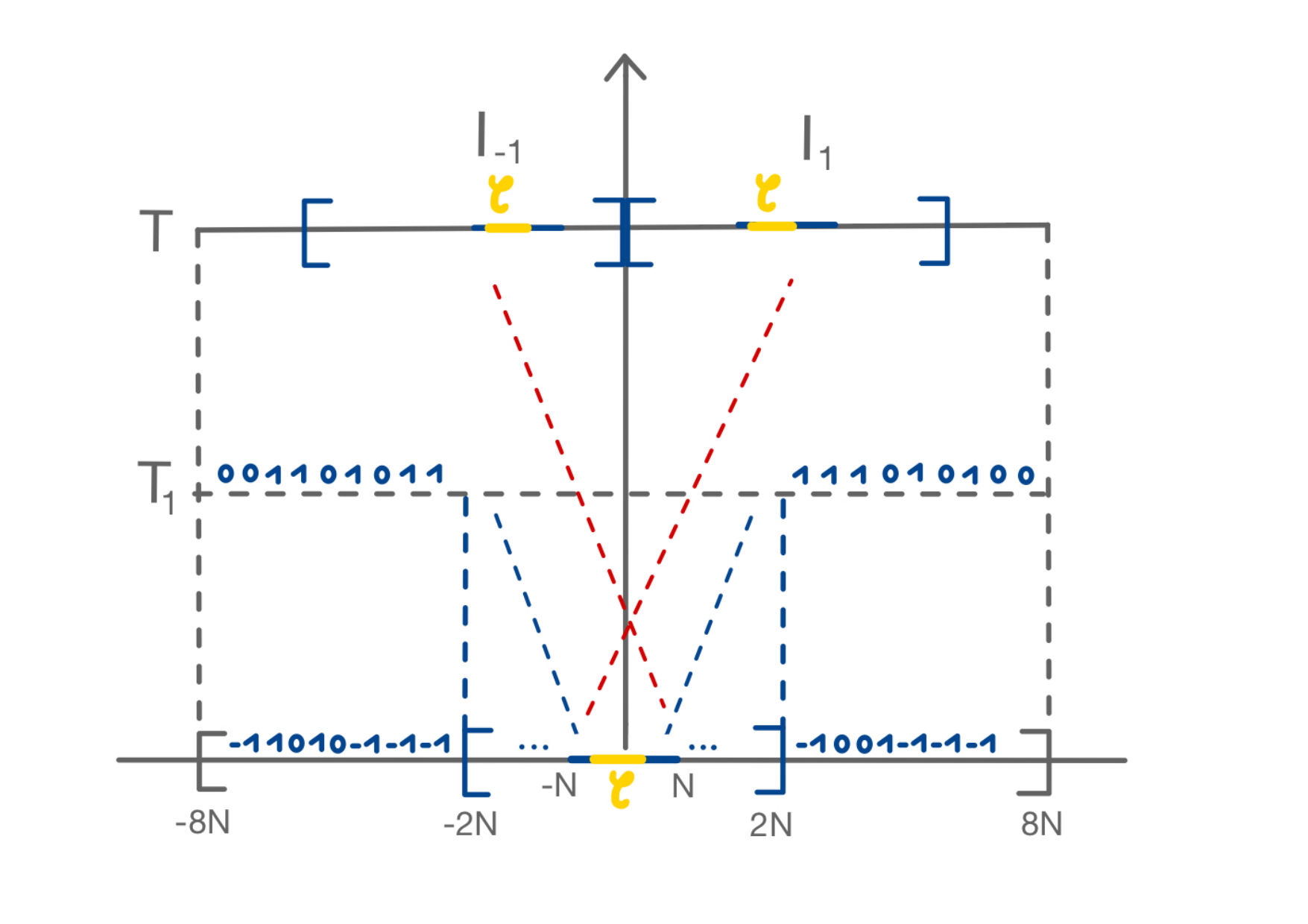}
        \end{center}
        \caption{At $t=0$ there is a translate $\mathcal{C}$ of $[-K,K]$ such that the contact process restricted to $\mathcal{C}$ survives with probability greater than $1-\gamma/2$. At time $T_1$, with high probability, all $-1$'s initially present have died and, up to time $T$, no new $-1$'s have appeared. From $t=0$ to $T_1$, the leftmost, resp. rightmost $1$, has not reached $-2N$, resp. $2N$ (blue dashed lines). From $t=0$ to $T$, w.h.p, the leftmost, resp. rightmost $1$ in a supercritical contact process with parameter $\lambda p$ starting from $\mathcal{C}$, has reached $I_1$, resp. $I_{-1}$ (red dashed line). At $T$, the supercritical contact process is not extinct with probability greater than $1-\gamma/2$ and, w.h.p, on $[-3N,3N]$ it is coupled to a contact process where all sites are initially filled with $1$'s. }
    \end{figure}
Define the following events, measurable with respect to the graphical representation in $R$:
\begin{equation*}
    E_1^{\xi}:=\Big(\text{No} -1~ \text{appears in } R~ \text{after time } 0 \Big),
\end{equation*}
\begin{equation*}
    E_2^{\xi} := \Big(\text{If type} -1~\text{individuals are present in } \xi \text{ in } (-8N,8N)^d \setminus I,~ \text{they all die by time } T_1 \Big).
\end{equation*}
Denoting by $A_{N,R}(\lambda,p)$ the first arrival  time of a Poisson process in $[-8N,8N]^d\times [0,T]$ with rate $2\lambda(1-p)$,
\begin{equation*}
      \mathbb{P}\big(E_1^{\xi}\big) = \mathbb{P}\big(A_{N,R}(\lambda,p)>T\big) = e^{-2\lambda(1-p)(16N+1)^dT}.
\end{equation*}
Moreover, as type $-1$'s individuals die at rate $1$,
\begin{equation*}
    \mathbb{P}\big(E_2^{\xi}|E_1^{\xi}\big) = \Big(1-\exp(-T_1)\Big)^{(12N)^d}.
\end{equation*}
Therefore,
\begin{equation}\label{E1&E2}
    \mathbb{P}\big(E_2^{\xi}\cap E_1^{\xi}\big) = e^{-2\lambda(1-p)(16N+1)^dT}\Big(1-\exp(-T_1)\Big)^{(12N)^d}.
\end{equation}
 Let us define two other intermediate events.
Denote by $(\zeta_t^{|\mathcal{C}})_{0\leq t \leq T}$ the contact process with birth rate $\lambda p> \lambda_c(d)$ starting from $\xi^{|\mathcal{C}}$, (recall the definition in \eqref{defrestict}) and evolving according to the graphical representation of $Spont(\lambda,p)$ on $\mathbb{Z}^d$, but ignoring the $-1$ crosses. Introduce,
\begin{equation*}
    E_3^{\xi}:=\Big(\text{by time } T_1, \text{ the } 1's \text{ in } \zeta_{T_1}^{|\mathcal{C}} \text{ have not reached the boundaries of } [-2N,2N]^d\Big),
\end{equation*}
\begin{equation*}
    E_4^{\xi}:=\Big(\forall x\in [-3N,3N]^d + A(\zeta_0^{|\mathcal{C}}),~ \zeta_T^{|\mathcal{C}}(x) =\zeta_T^{\Z^d}(x) \Big).
\end{equation*}
Denoting by $\partial$ the boundary points of a given box, writing $\mathcal{A}$ instead of $\mathcal{A}(\xi)$ and using Proposition \ref{ALL}, 
 \begin{equation}\label{E_3proba_1}
\begin{split}
\mathbb{P}\big(E_3^{\xi} \big) &\geq 1- \mathbb{P}\Big(\exists x \in \partial [-2N,2N]^d,~ x\in \mathcal{H}_{T_1}^{\mathcal{A}}\Big)\\
&\geq 1-C_{K}N^{d-1}e^{-\ell_{K}T_1},
\end{split}
\end{equation}
where $C_K$ and $\ell_K$ are constants depending only on $K$ and which might change from line to line in the sequel. Let us now lower bound $\mathbb{P}\big(E_4^{\xi}\big)$. By Proposition \ref{Schinaz}, 
\begin{equation}
    \begin{split}\label{E_4proba_1}
       \mathbb{P}\big(E_4^{\xi}\big)&= 1- \mathbb{P}\Big(\exists x\in [-3N,3N]^d + A(\zeta_0^{|\mathcal{C}}),~\zeta_T^{|\mathcal{C}}(x) \neq \zeta_T^{\Z^d}(x) \Big)\\
        &\geq 1- \mathbb{P}\Big(\exists x\in [-3N,3N]^d + A(\zeta_0^{|\mathcal{C}}),~\zeta_T^{|\mathcal{C}}(x) \neq \zeta_T^{\Z}(x)\big) \cap \big(|\zeta_T^{|\mathcal{C}}|\neq 0 \big) \Big)- \mathbb{P}\big(|\zeta_T^{|\mathcal{C}}|= 0 \big)\\
        &\geq 1-(7N)^dC_Ke^{-\ell_K T}-\gamma/2.
    \end{split}
\end{equation}
Collecting \eqref{E1&E2}, \eqref{E_3proba_1} and \eqref{E_4proba_1}, we are left with
\begin{equation}
\begin{split}\label{firstboundG}
        \mathbb{P}\big(G^{\xi}\big)&\geq  e^{-2\lambda(1-p)(16N+1)^dT}\Big(1-\exp(-T_1)\Big)^{(12N)^d} \big(1- C_{K}N^{d}e^{-\ell_{K} T_1}-\gamma/2 \big)\times\mathbb{P}\big(G^{\xi}\big| E^{\xi}\big).
\end{split}
\end{equation}
On the event $E^{\xi}:=E_1^{\xi}\cap E_2^{\xi}\cap E_3^{\xi}\cap E_4^{\xi}$, 
\begin{equation}\label{DifferencewithIS}
    \forall (x,t)\in R,~ \xi^R_t(x) \geq \zeta_t^{|\mathcal{C}}(x),~ \text{a.s.}
\end{equation} Therefore, 
\begin{equation}
    \begin{split}
        \mathbb{P}\big(G^{\xi}\big| E^{\xi}\big)&\geq \mathbb{P}\Big(\zeta_T^{|\mathcal{C}}\in \tau_{-2Ne_1}H\cap \tau_{2Ne_1}H\big|E^{\xi}\Big)\geq \mathbb{P}\Big(\zeta_T^{\Z^d}\in \tau_{-2Ne_1}H\cap \tau_{2Ne_1}H\big|E^{\xi}\Big).
    \end{split}
\end{equation}
Using that $(\zeta_t^{|\mathcal{C}})_{0\leq t \leq T}$ is independent of the events $E_1^{\xi}$ and $E_2^{\xi}$ (the contact process ignores the $-1$ birth and death events), we get
\begin{equation}\label{GxiExi}
    \begin{split}
        \mathbb{P}\big(G^{\xi}\big| E^{\xi}\big)
        &\geq \mathbb{P}\Big(\zeta_T^{\Z^d}\in \tau_{-2Ne_1}H\cap \tau_{2Ne_1}H\big|E_3^{\xi}\cap E_4^{\xi}\Big)\\
        &\geq \mathbb{P}\Big(\zeta_T^{\Z^d}\in \tau_{-2Ne_1}H\cap \tau_{2Ne_1}H\Big) - \Big(1-\mathbb{P}\big(E_3^{\xi}\cap E_4^{\xi}\big)\Big)\\
        &\geq \mathbf{P}_{\zeta^{\Z^d}}^{\lambda p}\Big(\tau_{2Ne_1}\zeta_T^{\Z^d}\in H\Big)^2 - \Big(1-\mathbb{P}\big(E_3^{\xi}\cap E_4^{\xi}\big)\Big),
    \end{split}
\end{equation}
where the last line comes from the FKG inequality and the translation invariance of the process.

Collecting \eqref{firstboundG} and \eqref{GxiExi}, using that $\alpha'(p)$ and $\alpha_1(p)$ are uniformly bounded above and denoting by $C>0$, a constant that is uniform over all the variables involved, we get
\begin{equation}\label{GXI}
    \begin{split}
         \mathbb{P}\big(G^{\xi}\big) & \geq e^{-2\lambda C (1-p)N^{d+1}}\Big(1-e^{-CN} \Big)^{CN^d}\Big(1-\gamma/2-C_KN^d e^{-\ell_K N} \Big)\\
         &\times \Big(\mathbf{P}_{\zeta^{\Z^d}}^{\lambda p}\Big(\tau_{2Ne_1}\zeta_T^{\Z^d}\in H\Big)^2 - C_KN^{2d-1}e^{-\ell_K N}-\gamma/2\Big).
    \end{split}
\end{equation}
We are left to estimate the term $\mathbf{P}_{\zeta^{\Z^d}}^{\lambda p}\Big(\tau_{2Ne_1}\zeta_T^{\Z^d}\in H\Big)$. We claim that for $N$ and $K$ large enough,
\begin{equation}\label{CLAIM}
    \mathbf{P}^{\lambda p}\Big[\tau_{2N} \zeta_T^{\Z^d}\in H \Big] \geq 1-\gamma/4.
\end{equation}
Therefore, plugging this into \eqref{GXI}, by taking $N$ large enough, and then $p$ close enough to $1$, we get that \eqref{lowerboundprobagoodevent} holds, hence the result.

Let us prove the claim \eqref{CLAIM}. Since the law of $\zeta^{\Z^d}$ stochastically dominates the upper invariant measure $\overline{\nu}$ of the contact process with birth rate $\lambda p$, we have, by translation invariance of $\overline{\nu}$ and the relation \eqref{Dualrelationcontact}, 
\begin{equation}\label{Limitlastline}
    \begin{split}
        \mathbf{P}^{\lambda p}\Big[\tau_{2N} \zeta_T^{\Z^d}\in H \Big]&\geq \overline{\nu}\Big(\zeta,~ \mathbf{P}^{\lambda p}_{\zeta^{|[-K,K]^d}}\big(A(\zeta_t^{|[-K,K]^d})\neq \emptyset,~\forall t \geq 0 \big)>1-\gamma/2 \Big)\\
        &\geq \overline{\nu}\Big(\zeta,~ \overline{\nu}\big(\Tilde{\zeta},~A(\Tilde{\zeta})\cap A(\zeta^{|[-K,K]^d})\neq \emptyset\big)>1-\gamma/2\Big)\\
        & \underset{K \rightarrow \infty}{\longrightarrow}\overline{\nu}\Big(\zeta,~ \overline{\nu}\big(\Tilde{\zeta},~A(\Tilde{\zeta})\cap A(\zeta)\neq \emptyset\big)>1-\gamma/2\Big).
    \end{split}
\end{equation}
Now again by \eqref{Dualrelationcontact},
\begin{equation*}
\begin{split}
        \overline{\nu}\Big(\zeta,~ \overline{\nu}\big(\Tilde{\zeta},~A(\Tilde{\zeta})\cap A(\zeta)\neq \emptyset\big)>1-\gamma/2\Big) & = \overline{\nu}\Big(\zeta,~\mathbf{P}_{\zeta}^{\lambda p}\Big( A(\zeta_t)\neq \emptyset,~ \forall t \geq 0\Big)>1-\gamma/2\Big)\\
        &\geq \overline{\nu}\Big(\zeta,~\mathbf{P}_{\zeta}^{\lambda p}\Big( A(\zeta_t)\neq \emptyset,~ \forall t \geq 0\Big)=1\Big)\\
        &\geq \overline{\nu}\big(\zeta,~ A(\zeta)\neq \emptyset \big) = 1.
\end{split}
\end{equation*}
Therefore, the limit in the last line in \eqref{Limitlastline} equals $1$, so by taking $N$ and $K$ large enough, we get the desired claim. 

\begin{rem}
One could argue that the comparison theorem should be directly applicable to the $IS(\lambda,p)$ process, without having to use the intermediate $Spont(\lambda,p)$ process. This might be the case, using some more convoluted block construction machinery. However, for the block construction we suggest, this fails, since we strongly rely on the fact that the $Spont$ process restricted to a block, is smaller than the whole process (Lemma \ref{Restriction}). This does not hold for the $IS(\lambda,p)$ process since the presence of $1$'s just outside $R$ can lead to the birth of $-1$'s inside it, contrary to the $IS(\lambda,p)$ process restricted to exponential clocks ringing only inside $R$.
\end{rem}

\appendix
\section{Necessary and sufficient conditions for monotonicity}\label{ApppendixBorrello}
Here, we recall the result established by Davide Borrello in \cite{Borrello}, which characterizes the monotonicity of a particle system in terms of conditions on its transition rates. For simplicity, we provide a statement adapted to our model and refer to Theorem 2.4 of \cite{Borrello}, for a complete statement. Recall Borrello's notation for the birth and death rates of a particle system with state space $\Omega=\{-1,0,1\}^{\Z^d}$: for $\alpha,\beta \in \{-1,0,1\}$, and $0\leq k \leq 2$,
\begin{itemize}
    \item $R_{\alpha,\beta}^{0,k}$ is the rate at which site $y$ goes from state $\beta$ to $\beta + k$, due to the fact that $x$ is in state $\alpha$.
    \item $P_{\beta}^k$ is the rate at which site $y$ goes from state $\beta$ to $\beta + k$, independently of everything else.
    \item $R_{\alpha,\beta}^{-k,0}$ is the rate at which site $x$ goes from state $\alpha$ to $\alpha-k$, due to the fact that $y$ is in state $\beta$.
    \item $P_{\beta}^{-k}$ is the rate at which site $y$ goes from state $\beta$ to $\beta -k$, independently of everything else.
\end{itemize}
Introduce
\begin{equation*}
    \Pi_{\alpha,\beta}^{0,k}:= R_{\alpha,\beta}^{0,k} + P_{\beta}^k, ~~\text{and}~~\Pi_{\alpha,\beta}^{-k,0}:= R_{\alpha,\beta}^{-k,0} + P_{\alpha}^{-k}.
\end{equation*}
\begin{thm}{\cite[Theorem 2.4]{Borrello}} 
A particle system with transition rates $R_{\alpha,\beta}^{0,k}, P_{\beta}^k, R_{\alpha,\beta}^{-k,0}, P_{\beta}^{-k}$  is larger than a particle system with transition rates $\Tilde{R}_{\alpha,\beta}^{0,k}, \Tilde{P}_{\beta}^k, \Tilde{R}_{\alpha,\beta}^{-k,0}, \Tilde{P}_{\beta}^{-k}$, if and only if:
\begin{equation} \label{I1}
    \sum_{0\leq k \leq 2,~ k>j+\delta-\beta} \Tilde{\Pi}_{\alpha,\beta}^{0,k}  \leq \sum_{0\leq k \leq 2,~ k>j} \Pi_{\gamma,\delta}^{0,k}, 
\end{equation}
and
\begin{equation} \label{I2}
    \sum_{0\leq k \leq 2,~ k>h+ \gamma -\alpha} \Pi_{\gamma,\delta}^{-k,0}\leq \sum_{0\leq k \leq 2,~ k>h} \Tilde{\Pi}_{\alpha,\beta}^{-k,0} 
\end{equation}
for all $h,j\geq 0$, $(\alpha,\beta)$ and $(\gamma,\delta)$, such that $\alpha \leq \gamma$, and $\beta\leq \delta$.
\end{thm}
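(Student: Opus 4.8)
The plan is to establish the two implications of the characterization separately: the ``if'' direction by producing, from the rate data, an order-preserving (monotone) coupling of the two processes, and the ``only if'' direction by an infinitesimal argument showing that a failure of \eqref{I1} or \eqref{I2} forces the order to be broken with positive probability in arbitrarily short time. In both directions the decisive simplification is that the dynamics has range one: every elementary transition concerns a single site $y$ (for the spontaneous rates $P$) or a site $y$ together with one neighbour $x$ (for the rates $R$). Thus the global monotonicity question reduces to a finite problem attached to each ordered pair of neighbouring local configurations, living on the finite set $\{-1,0,1\}$ with the prescribed order.

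For the ``if'' direction, the goal is a coupled generator $\overline{\mathcal{L}}$ on $\Omega\times\Omega$ whose two marginals are the generators of the two processes and which leaves the set of ordered pairs $\Delta=\{(\xi,\Tilde{\xi}):\Tilde{\xi}\leq\xi\}$ invariant; by the standard monotonicity criterion for Markov generators (see \cite[Ch.~II--III]{IPS}), such a generator yields the asserted domination. Because the interaction has range one, it suffices to build $\overline{\mathcal{L}}$ as a superposition, over sites $y$ and neighbours $x\sim y$, of local coupled jump kernels: for each pair of local states with $\Tilde{\xi}(x)=\alpha\leq\gamma=\xi(x)$ and $\Tilde{\xi}(y)=\beta\leq\delta=\xi(y)$ one needs a coupling of the two jump laws at $y$ that has the prescribed marginal rates ($R,P$ for the $\xi$-coordinate, $\Tilde{R},\Tilde{P}$ for the $\Tilde{\xi}$-coordinate) and that never reverses the order on the $x$- or $y$-coordinate. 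The existence of such a local coupling is a Strassen/Hall-type feasibility statement on the finite poset of states; carrying out the associated marriage (max-flow/min-cut) inequalities, separately for the upward moves at $y$ and the downward moves at $x$, produces exactly the families \eqref{I1} and \eqref{I2}, with the cuts indexed by $j$ and $h$. One then superposes the local couplings, checks that the total rates remain bounded so that $\overline{\mathcal{L}}$ generates a Feller process, and concludes.

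For the ``only if'' direction, assume the process with rates $R,P$ dominates the one with rates $\Tilde{R},\Tilde{P}$. Fix neighbouring sites $x\sim y$ and local states with $\alpha\leq\gamma$ and $\beta\leq\delta$, pick configurations $\Tilde{\xi}\leq\xi$ that realise these values at $x,y$ and agree everywhere else, and start the monotone coupling from $(\xi,\Tilde{\xi})$. For $h\to 0^{+}$, the probability that within time $h$ the $\Tilde{\xi}$-coordinate at $y$ jumps above level $\delta+j$ while the $\xi$-coordinate at $y$ does not is at least $h\big(\sum_{k>j+\delta-\beta}\Tilde{\Pi}_{\alpha,\beta}^{0,k}-\sum_{k>j}\Pi_{\gamma,\delta}^{0,k}\big)^{+}+o(h)$; were \eqref{I1} to fail for some $j$ this would be strictly positive, contradicting $\Tilde{\xi}_h\leq\xi_h$ almost surely. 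The symmetric computation with downward moves at $x$ yields \eqref{I2}. No constraint is produced by unordered pairs of states, which is why the conditions are imposed only for $\alpha\leq\gamma$ and $\beta\leq\delta$.

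The step I expect to be the main obstacle is the feasibility argument in the ``if'' direction: showing that \eqref{I1}--\eqref{I2} are not merely necessary but actually \emph{sufficient} for the local order-preserving coupling to exist, in particular when the order on $\{-1,0,1\}$ is only a partial order (as for the order $0,-1<1$ relevant to the $IS$ model), since then the relevant cuts are order ideals that need not be intervals and one must verify the inequalities over all of them. This combinatorial bookkeeping, together with the check that the two-site local couplings glue into a well-defined generator on the whole lattice, is the technical core of the statement and is carried out in detail in \cite{Borrello}.
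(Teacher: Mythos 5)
There is nothing in the paper to compare against here: Appendix \ref{ApppendixBorrello} merely \emph{recalls} Borrello's Theorem 2.4 in a form adapted to the three-state space, and the paper gives no proof of it — the result is imported wholesale from \cite{Borrello}. So the only question is whether your blind sketch is a sound proof of the statement in its own right. Its architecture does match the known proof: sufficiency via an explicit increasing Markovian coupling built from local two-site coupled jump kernels, necessity via a short-time/generator argument. Your necessity argument is essentially complete (and could be streamlined: since $\{\zeta(y)>\delta+j\}$ is an increasing event, stochastic domination gives $\mathbb{P}(\Tilde{\xi}_t(y)>\delta+j)\leq\mathbb{P}(\xi_t(y)>\delta+j)$ directly for ordered two-site initial data, and dividing by $t$ and letting $t\to0$ yields \eqref{I1}, with the symmetric argument at $x$ giving \eqref{I2}; no coupling is needed, and this avoids your notational clash of $h$ serving both as the time step and as the cut index in \eqref{I2}).

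The gap is in the sufficiency direction, and you name it yourself: the assertion that the Hall/max-flow feasibility inequalities for an order-preserving local coupling with the prescribed marginals reduce \emph{exactly} to \eqref{I1}--\eqref{I2}, and that the pairwise $(x,y)$-indexed local couplings superpose into a well-defined global order-preserving generator, is stated but not carried out; it is deferred to \cite{Borrello}. That verification (including the case where the order on $\{-1,0,1\}$ is only partial, so the relevant upper sets are general order ideals) is the actual content of the theorem, so as a standalone proof the proposal is incomplete. Since the paper itself cites the result without proof, this does not put you at odds with the paper — but be aware that what you have written is a faithful roadmap to Borrello's argument rather than a proof.
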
\medskip

\noindent \textbf{Acknowledgements:} I am very greatful to Rinaldo Schinazi for suggesting this model to me. I also wish to thank Ellen Saada, Thomas Mountford and Assaf Shapira for some very useful discussions.

\bibliographystyle{plain}
\bibliography{biblio.bib}

\end{document}